\documentclass{article}
\usepackage{amsmath, amsthm, amsfonts, amssymb}
\usepackage{hyperref}
\newtheorem{theorem}{Theorem}[section]
\newtheorem{lemma}[theorem]{Lemma}
\newtheorem{proposition}[theorem]{Proposition}
\newtheorem{corollary}[theorem]{Corollary}
\theoremstyle{definition}
\newtheorem{definition}[theorem]{Definition}
\newtheorem{example}[theorem]{Example}

\theoremstyle{remark}
\newtheorem{remark}[theorem]{Remark}
\numberwithin{equation}{section}


%

%

%

%

%

%


\begin{document}
\title{\bf Bicomplex Weighted Hardy Spaces and Bicomplex $C\textbf{*}$-algebras}
\date{\textbf{Romesh Kumar, Kulbir Singh, Heera Saini, Sanjay Kumar}}
\vspace{0in}
\maketitle

$\textbf{Abstract.}$ In this paper we study the bicomplex version of weighted Hardy spaces. Further, we describe reproducing kernels for the bicomplex weighted Hardy spaces. In particular, we generalize some results which holds for the classical weighted Hardy spaces. We also introduce the notion of bicomplex $C\textbf{*}$-algebra and discuss some of its properties including maximal ideals in $\mathbb {BC}$.\\\\ 
$\textbf{Keywords.}$ Bicomplex modules, hyperbolic norm, bicomplex functional analysis, maximal ideals, $C\textbf{*}$-algebras, weighted Hardy spaces, reproducing kernels. 

\begin{section} {Introduction and Preliminaries}
 \renewcommand{\thefootnote}{\fnsymbol{footnote}}
\footnotetext{2010 {\it Mathematics Subject Classification}. 
30G35, 46A22, 47A60.}
\footnotetext{ The research of the second author is supported by CSIR-UGC (New-Delhi, India).}
 The theory of Hardy spaces has been a central part of the modern harmonic analysis. Hardy spaces are the spaces of holomorphic functions on different domains in $\mathbb C$ or in $\mathbb C^n$. In this paper we study the bicomplex version of the weighted Hardy spaces with a hyperbolic $(\mathbb D$-valued) norm which is a generalization of real-valued norm. In section 2, we define bicomplex $C\textbf{*}$-algebra and describe its relation with the classical $C\textbf{*}$-algebras. Further, we derive some properties of the bicomplex $C\textbf{*}$-algebras and introduce the notion of bicomplex quotient $C\textbf{*}$-algebras. In section 3, we define composition operators on the bicomplex Hardy space $H^2(\mathbb{U}_{\mathbb{B}\mathbb{C}})$. Further, we estimated the norm of a bounded composition operator induced by a bicomplex disc automorphism on bicomplex Hardy space $H^2(\mathbb{U}_{\mathbb{B}\mathbb{C}})$. Section 4 deals with the bicomplex version of the weighted Hardy spaces. We derive some results including the reproducing kernels for the bicomplex weighted Hardy spaces. For a study of the weighted Hardy spaces, one can refer to \cite{EE}, \cite{shield}, and many other references therein.\\  

Now we summarize some basic properties of bicomplex numbers and bicomplex holomorphic functions.
Let $i$ and $j$ be two imaginary units such that
$ ij=ji,  i^2=j^2=-1$. The set of bicomplex numbers $\mathbb{B}\mathbb{C}$ is then defined as 
\begin{align}\mathbb{B}\mathbb{C}&=\left\{Z=z+jw\;|\;z,w\in \mathbb{C}(i)\right\}
\end{align} where $\mathbb{C}(i)$ is the set of complex numbers with the imaginary unit $i$.
The set $\mathbb{B}\mathbb{C}$ turns out to be a ring with respect to the sum and product  defined by\\
 $$Z+U=(z+jw)+(u+jv)=(z+u)+j(w+v)\;,$$ $$ZU=(z+jw)(u+jv)=(zu-wv)+j(wu+zv)$$ and thus it turns out to be a module over itself. The ring $\mathbb C(i)$ is a subring of $\mathbb{B}\mathbb{C}$, which is in fact a field and thus $\mathbb{B}\mathbb{C}$ can be seen as a vector space over $\mathbb{C}(i)$. For further details on bicomplex analysis, we refer the reader to \cite{YY}, \cite{GG},  \cite{MM}, \cite{M_1M_1}, \cite{ZZ}, \cite{KK},  \cite{RR}, \cite{XX} and references therein.  
 In (1.1), if $z=x$ is real and $w=iy$ is a purely imaginary number with $ij=k$, then we obtain the set of hyperbolic numbers 
 $$\mathbb{D}=\left\{x+ky:k^2=1 \;\text{and}\;x,y\in \mathbb{R}\; \text{with}\;k \notin \mathbb R\right\}.$$
Since bicomplex numbers are defined as the pair of two complex numbers connected through another imaginary unit, there are several natural notions of conjugation.
Let $Z=z+jw\in \mathbb{B}\mathbb{C}$. Then the following three conjugates can be defined in $\mathbb{B}\mathbb{C}$:\\
(i)$Z^{\dagger_1}=\overline{z}+j\overline{w}$\;,\;\; (ii)\;$Z^{\dagger_2}=z-jw$\;,\;\;(iii)\;$Z^{\dagger_3}= \overline{z}-j\overline{w},$\;\;
where $\overline{z}$, $\overline{w}$ denote the usual complex conjugates to $z, w$ in $\mathbb C(i).$  
With each kind of the three conjugations, three possible moduli of a bicomplex number are defined as follows: 
(i) $|Z|^2_j=Z\;.\;Z^{\dagger_1},$
(ii) $|Z|^2_i=Z\;.\;Z^{\dagger_2},$  
(iii) $|Z|^2_k=Z\;.\;Z^{\dagger_3}.$\\ 	 
 A bicomplex number $Z=z+jw$ is said to be invertible if $|Z|_i\neq 0$, that is, $z^2+w^2\neq 0$ and its inverse is given by $Z^{-1}=\frac{Z^{\dagger_2}}{|Z|^2_i}.$ If both $z$ and $w$ are non-zero but the sum $z^2+w^2=0$, then $Z$ is a zero-divisor in $\mathbb {BC}$. We denote the set of all zero divisors in $\mathbb{B}\mathbb{C}$ by $\mathcal {NC}$, that is, $$\mathcal {NC} =\left\{Z\;|\; Z\neq 0,\; z^2+w^2=0 \right\}.$$
The algebra of $\mathbb{B}\mathbb{C}$ is not a division algebra, since one can see that if $e_1=\frac{1+ij}{2}$ and $e_2=\frac{1-ij}{2}$, then $e_1.\;e_2=0$, i.e., $e_1$ and $e_2$ are zero divisors. The numbers $e_1,e_2$ are mutually complementary idempotent components. They make up the so called idempotent basis of bicomplex numbers. Thus, every bicomplex number $Z=z+jw$ can be written in a unique way as:  
\begin{align}Z&=e_1z_1+e_2z_2 \;,
\end{align}  where $z_1=z-iw$ and $z_2=z+iw$ are elements of  $\mathbb{C}(i)$. Formula (1.2) is called the idempotent representation of a bicomplex number $Z$. Further, the two sets $e_1\mathbb {BC}$ and $e_2\mathbb {BC}$ are (principal) ideals in the ring $\mathbb {BC}$ such that $e_1\mathbb {BC} \; \cap \;e_2\mathbb {BC}= \left\{0\right\}$. Thus, we can write \begin{align} \mathbb {BC}= e_1\mathbb {BC}+e_2\mathbb {BC}\;.\end{align} Formula (1.3) is called the idempotent decomposition of $\mathbb {BC}$. In fact, $e_1, e_2$ are hyperbolic numbers. Thus any hyperbolic number $\alpha=x+ky$ can be written as $$\alpha=e_1\alpha_1 +e_2\alpha_2,$$ where $\alpha_1=x+y$ and $\alpha_2=x-y$ are real numbers. 
The set of non-negative hyperbolic numbers is given by (see, e.g., \cite [P. 19] {YY}),$$\mathbb D^+=\left\{\alpha=e_1\alpha_1+e_2\alpha_2 \;|\;\alpha_1, \alpha_2 \geq 0\right\}.$$ For any $\alpha, \beta \in \mathbb D$, we write $\alpha \leq'\beta$ whenever $\beta-\alpha\in \mathbb D^+$ and it defines a partial order on $\mathbb D$. Let $Z=z+jw \in \mathbb {B}\mathbb {C}$, the Euclidean norm of a bicomplex number $Z$ is defined by
$$|Z|=\sqrt{|z|^2+|w|^2}\;.$$
Further, the hyperbolic modulus $|Z|_k$ of a bicomplex number $Z$ is given by $|Z|^2_k=Z\;.\;Z^{\dagger_3}.$
Thus, writing $Z=e_1z_1+e_2z_2$, we have $$|Z|_k=e_1|z_1|+e_2|z_2|$$ and is called the hyperbolic ($\mathbb D$-valued) norm of $Z$. The hyperbolic norm and the Euclidean norm in $\mathbb{B}\mathbb{C}$ has been intensively discussed in \cite [Section 1.3] {YY}.\\
Let $X$ be a $\mathbb{B}\mathbb{C}$-module. Then we can write (see, \cite{GG}, \cite{XX})
\begin{align}X=e_1X_1+e_2X_2\;,\end{align} where $X_1=e_1X$ and $X_2=e_2X$ are complex-linear spaces as well as $\mathbb{B}\mathbb{C}$-modules. Formula (1.4) is called the idempotent decomposition of $X$. Thus, any $x$ in $X$ can be uniquely written as $x=e_1x_1+e_2x_2$ with $x_1\in X_1,\;x_2\in X_2$. Assume that $X_1$, $X_2$ are normed spaces with respective norms $\|.\|_1,\;\|.\|_2$. For any $x \in X$, set 
\begin{align} \|x\|=\sqrt{\frac{\|x_1\|^2_1+\|x_2\|^2_2}{2}}\;\;.\end{align}
 Then $\|.\|$ defines a real-valued norm on $X$ such that for any $\lambda \in \mathbb{B}\mathbb{C}$,\; $x\in X$, it is 
$\|\lambda x\|\leq \sqrt{2}|\lambda|\;\|x\|$. The $\mathbb{B}\mathbb{C}$-module $X$ can also be endowed canonically with the hyperbolic ($\mathbb D$-valued) norm given by the formula  \begin{align} \|x\|_{\mathbb D}=\|e_1x_1+e_2x_2\|_{\mathbb D}=e_1\|x\|_1+e_2\|x_2\|_2 \;,\end{align} such that for any $\lambda\in \mathbb{B}\mathbb{C},\;x\in X$, we have $\|\lambda x\|_{\mathbb D}=|\lambda|_k\;\|x\|_{\mathbb D}$. 
Note that if the $\mathbb{B}\mathbb{C}$-module $X$ has a hyperbolic norm $\|.\|_{\mathbb D}$, then the formula
 \begin{align} |\|x\|_{\mathbb D}|=\|x\| 
\end{align} defines a real-valued norm on $X$.
For more details on hyperbolic ($\mathbb D$-valued) norm see, \cite [Section 4.2] {YY}.\\
 We now define $\mathbb{B}\mathbb{C}$-inner product on $\mathbb{B}\mathbb{C}$-module $X$ which is completely characterized by the inner products on its idempotent components $X_1\; \text{and}\; X_2$. Write $X=e_1X_1+e_2X_2$ and assume $X_1,\;X_2$ are inner product spaces, with inner product  $<.,.>_1$,\; $<.,.>_2$ respectively and corresponding norms $ \|.\|_1\;\text{and}\; \|.\|_2.$ Then the formula
 \begin{align}
 <x,y>_X&=<e_1x_1+e_2x_2,e_1y_1+e_2y_2>_X\notag\\
&= e_1<x_1,y_1>_1+\;\;e_2<x_2,y_2>_2\;,
\end{align}
defines a $\mathbb B\mathbb C$-inner product on $X$. Moreover, the inner product (1.8) introduces a hyperbolic norm on $\mathbb{B}\mathbb{C}$-module $X$ defined as : 
\begin{align}\|x\|_{\mathbb D}=\|e_1x_1+e_2x_2\|_{\mathbb D}=<x,x>_X^\frac{1}{2}\;\;.\end{align}

A $\mathbb B \mathbb C$-inner product in $\mathbb B \mathbb C$ can also be given by the formula  
\begin{align}<Z,W>=Z\;.\;W^{\dagger_3}\;.\end{align}
Note that the inner product (1.10) coincides with the inner-product (1.8) on the $\mathbb{B}\mathbb{C}$-module $\mathbb{B}\mathbb{C}$.
  For more details, see, (\cite  [section 4.3] {YY} and \cite{GG}).
 \begin{theorem}$(\;\text{\cite [Theorem 3.5] {KS}}\;)$ Let $(X, \|.\|)$ be a $\mathbb {BC}$-module. Then $X=e_1X_1+e_2X_2$ is a bicomplex Banach-module if and only if $X_1$, $X_2$ are complex Banach-spaces.
\end{theorem}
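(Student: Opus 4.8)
The plan is to exploit the fact that, under the real-valued norm (1.5), the bijection $x \leftrightarrow (x_1,x_2)$ afforded by the idempotent decomposition (1.4) realizes $(X,\|\cdot\|)$ as a rescaled $\ell^2$-direct sum of the normed spaces $(X_1,\|\cdot\|_1)$ and $(X_2,\|\cdot\|_2)$, and then to invoke the elementary fact that a finite direct sum of metric spaces is complete if and only if each summand is complete. Since $X$ is assumed to be a $\mathbb{BC}$-module and (1.5) is already known to be a genuine real-valued norm, the content of ``$X$ is a bicomplex Banach-module'' is precisely completeness of $(X,\|\cdot\|)$; likewise $X_1,X_2$ are complex-linear normed spaces by construction, so only their completeness is in question.

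First I would record two norm estimates linking $X$ with its idempotent components. From $\|x\|^2=\tfrac12(\|x_1\|_1^2+\|x_2\|_2^2)$ one immediately gets $\|x_k\|_k\le \sqrt2\,\|x\|$ for $k=1,2$. Conversely, using $e_1^2=e_1$, $e_2^2=e_2$ and $e_1e_2=0$, each $u\in X_1$ satisfies $e_1u=u$ and, as an element of $X$, has idempotent components $(u,0)$, so $\|u\|=\tfrac{1}{\sqrt2}\|u\|_1$; similarly $\|v\|=\tfrac{1}{\sqrt2}\|v\|_2$ for $v\in X_2$. In particular the canonical inclusions $X_k\hookrightarrow X$ are scalar multiples of isometries, and multiplication by $e_1$ and by $e_2$ are bounded idempotent $\mathbb{BC}$-module maps $X\to X$.

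For the ``if'' direction, assume $X_1,X_2$ are complex Banach spaces and let $(x^{(n)})_n$ be Cauchy in $(X,\|\cdot\|)$. By the first estimate, $(x_k^{(n)})_n$ is Cauchy in $(X_k,\|\cdot\|_k)$, hence converges to some $x_k\in X_k$; putting $x:=e_1x_1+e_2x_2\in X$ gives $\|x^{(n)}-x\|^2=\tfrac12(\|x_1^{(n)}-x_1\|_1^2+\|x_2^{(n)}-x_2\|_2^2)\to0$, so $X$ is complete and therefore a bicomplex Banach-module. For the ``only if'' direction, assume $(X,\|\cdot\|)$ is complete and let $(u^{(n)})_n$ be Cauchy in $(X_1,\|\cdot\|_1)$. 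By the second estimate it is Cauchy in $(X,\|\cdot\|)$, so $u^{(n)}\to x$ in $X$ for some $x\in X$; applying the continuous map ``multiplication by $e_1$'' and using $e_1u^{(n)}=u^{(n)}$ yields $x=e_1x\in X_1$, whence $\|u^{(n)}-x\|_1=\sqrt2\,\|u^{(n)}-x\|\to0$, so $X_1$ is complete. The identical argument with $e_2$ in place of $e_1$ shows $X_2$ is complete.

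I do not anticipate a genuine obstacle: the argument is the standard ``completeness passes to and from finite direct sums'' bookkeeping. The only points deserving care are verifying that (1.5) really is the $\ell^2$-type norm on $X_1\times X_2$ described above (immediate from the definition of $\|\cdot\|_1,\|\cdot\|_2$ together with the idempotent decomposition), and checking that on each $X_k$ the intrinsic norm $\|\cdot\|_k$ and the norm induced from $X$ are proportional --- which is exactly the computation $\|e_1u\|=\tfrac{1}{\sqrt2}\|u\|_1$ for $u\in X_1$ (and its analogue for $X_2$), and which incidentally exhibits $X_k$ as a closed subspace of $X$, being the range of the continuous projection ``multiplication by $e_k$''.
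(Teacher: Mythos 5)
Your argument is correct: the two norm estimates $\|x_k\|_k\le\sqrt2\,\|x\|$ and $\|u\|=\tfrac{1}{\sqrt2}\|u\|_1$ for $u\in X_1$ (and its analogue for $X_2$) are exactly what is needed to transfer Cauchy sequences and their limits back and forth between $X$ and its idempotent components, and the use of the continuous projections ``multiplication by $e_k$'' to see that the limit lands in $X_k$ closes the only potential gap. The paper itself states this theorem as a quotation from \cite[Theorem 3.5]{KS} without reproducing a proof, and your completeness-of-direct-sums bookkeeping is the standard argument used there, so there is nothing to add.
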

Moreover, it is clear that if $(X, \|.\|_\mathbb D)$ is a $\mathbb {BC}$-module then $X$ is a $\mathbb D$-normed bicomplex Banach module if and only if $X_1$, $X_2$ are complex Banach-spaces.     
\begin{definition} \cite  [P. 53] {YY} A $\mathbb{B}\mathbb{C}$-module $X$ with inner product $<.,.>_X$ is said to be a bicomplex Hilbert space if it is complete with respect to the $\mathbb D$-valued norm generated by the bicomplex inner product. Thus, $X=e_1X_1+e_2X_2$ is a bicomplex Hilbert space if and only if $(X_1,<,.,>_1)$ and $(X_2,<,.,>_2)$ are complex Hilbert spaces.
\end{definition}  
 
Let $\Omega$ be a domain in $\mathbb B\mathbb C$. Define $\Omega_1=\left\{z_1\;|\;e_1z_1+e_2z_2\in \Omega\right\}$ and $\Omega_2=\left\{z_2\;|\;e_1z_1+e_2z_2\in \Omega\right\}.$ Then $\Omega_1$ and $\Omega_2$ are domains in $\mathbb C(i)$. For the following Definition 1.3 and the proof of the Theorem 1.4, one can refer to \cite{ZZ} and \cite{KK}.
  \begin{definition} Let $f:\Omega\subseteq \mathbb B\mathbb C\rightarrow \mathbb B\mathbb C$ be a bicomplex function. Then $f$ is said to be a bicomplex holomorphic function in a domain $\Omega$ if it admits a bicomplex derivative at each point of the domain $\Omega$. 
  \end{definition}    
\begin{theorem} Let $\Omega$ be a cartesian domain in $\mathbb B\mathbb C$ determined by $\Omega_1$ and $\Omega_2$. A bicomplex-valued function $f:\Omega\subseteq \mathbb B\mathbb C\rightarrow \mathbb B\mathbb C$ is a bicomplex holomorphic function if and only if there exists two complex-valued holomorphic functions $f_1$ and $f_2$ on $\Omega_1$ and $\Omega_2$ respectively such that $$ f(Z)=f(e_1z_1+e_2z_2)=e_1f_1(z_1)+e_2f_2(z_2).$$  
\end{theorem}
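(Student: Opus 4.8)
The plan is to work throughout in the idempotent coordinates $Z=e_1z_1+e_2z_2$, exploiting two elementary facts. First, multiplication and inversion act componentwise: $(e_1a_1+e_2a_2)(e_1b_1+e_2b_2)=e_1a_1b_1+e_2a_2b_2$, and $(e_1a_1+e_2a_2)^{-1}=e_1a_1^{-1}+e_2a_2^{-1}$ whenever $a_1,a_2\neq 0$. Second, the coordinate map $e_1z_1+e_2z_2\mapsto(z_1,z_2)$ is a linear homeomorphism $\mathbb{B}\mathbb{C}\to\mathbb{C}(i)^2$ (indeed $|e_1z_1+e_2z_2|=\sqrt{(|z_1|^2+|z_2|^2)/2}$, cf. (1.5)), so a net converges in $\mathbb{B}\mathbb{C}$ iff both of its $\mathbb{C}(i)$-coordinates converge. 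I will also use that, since $\Omega$ is the cartesian domain determined by $\Omega_1,\Omega_2$, one has $\Omega=\{e_1z_1+e_2z_2: z_1\in\Omega_1,\ z_2\in\Omega_2\}$ with $\Omega_1,\Omega_2$ connected, so that each slice with one coordinate fixed lies in $\Omega$ and is connected.

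For the ($\Leftarrow$) direction I would fix $Z_0=e_1z_1^0+e_2z_2^0\in\Omega$ and an invertible increment $H=e_1h_1+e_2h_2\to 0$ (so $h_1,h_2$ are nonzero and tend to $0$); the componentwise product/inverse rules give
$$\frac{f(Z_0+H)-f(Z_0)}{H}=e_1\,\frac{f_1(z_1^0+h_1)-f_1(z_1^0)}{h_1}+e_2\,\frac{f_2(z_2^0+h_2)-f_2(z_2^0)}{h_2},$$
whose coordinates tend to $f_1'(z_1^0)$ and $f_2'(z_2^0)$. By componentwise convergence the quotient tends to $e_1f_1'(z_1^0)+e_2f_2'(z_2^0)$, the same value however $H\to 0$, so $f$ is bicomplex holomorphic on $\Omega$.

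For the ($\Rightarrow$) direction --- the substantive one --- I would write, a priori, $f(Z)=e_1g_1(z_1,z_2)+e_2g_2(z_1,z_2)$ with $g_1,g_2\colon\Omega_1\times\Omega_2\to\mathbb{C}(i)$, and read the same identity in reverse: for invertible $H=e_1h_1+e_2h_2$ the $e_1$-coordinate of the difference quotient of $f$ is $\frac{g_1(z_1^0+h_1,z_2^0+h_2)-g_1(z_1^0,z_2^0)}{h_1}$. Since the quotient converges in $\mathbb{B}\mathbb{C}$ as $(h_1,h_2)\to(0,0)$ through nonzero values, this coordinate converges to a limit $c_1$ that does not depend on the path of approach; comparing the paths $h_2=h_1$ and $h_2=\lambda h_1$ for $\lambda\neq 0,1$ gives $\frac{g_1(z_1^0+h,z_2^0+\lambda h)-g_1(z_1^0+h,z_2^0+h)}{h}\to 0$, which, together with the standard fact that a bicomplex holomorphic function is real-differentiable (see \cite{ZZ}, \cite{KK}), forces $\partial_{z_2}g_1\equiv 0$; symmetrically $\partial_{z_1}g_2\equiv 0$, while the path $h_2=h_1$ shows $g_1$ is holomorphic in $z_1$ and $g_2$ in $z_2$. (Equivalently, one may invoke directly the Cauchy--Riemann characterization of bicomplex holomorphy, which in idempotent coordinates is precisely $\partial_{z_2}g_1=\partial_{z_1}g_2=0$.) Since $\Omega_1,\Omega_2$ are connected, ``locally independent of $z_2$'' upgrades to ``globally independent of $z_2$'', yielding a well-defined holomorphic $f_1\colon\Omega_1\to\mathbb{C}(i)$ with $g_1=f_1(z_1)$, and likewise $f_2\colon\Omega_2\to\mathbb{C}(i)$, whence $f(Z)=e_1f_1(z_1)+e_2f_2(z_2)$.

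The step I expect to be the main obstacle is exactly this passage in ($\Rightarrow$) from ``the difference quotients over invertible increments converge'' to ``each $g_j$ depends only on its own variable.'' Because the definition of the bicomplex derivative restricts the increment to lie outside $\mathcal{NC}\cup\{0\}$, one can never let $H$ move purely in the $e_2$-direction, so independence of $g_1$ from $z_2$ does not fall out formally and must be teased out either from the bicomplex Cauchy--Riemann equations or from the path-comparison argument above (which itself leans on the a priori regularity of bicomplex holomorphic functions). A clean alternative route would be to first establish that a bicomplex holomorphic function is $\mathbb{B}\mathbb{C}$-analytic, expand $f(Z)=\sum_n a_n(Z-Z_0)^n$ locally, use $(Z-Z_0)^n=e_1(z_1-z_1^0)^n+e_2(z_2-z_2^0)^n$ to read off the local decomposition $f=e_1f_1+e_2f_2$, and patch the local pieces over the connected domains $\Omega_1,\Omega_2$.
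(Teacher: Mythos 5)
The paper does not actually prove this theorem: it is quoted as a known preliminary, with the proof deferred to \cite{ZZ} and \cite{KK}, so there is no in-paper argument to compare against. Judged on its own terms, your proposal is essentially the standard proof and is sound in outline. The ($\Leftarrow$) direction is complete as written: for invertible $H=e_1h_1+e_2h_2$ the difference quotient splits componentwise and converges to $e_1f_1'(z_1^0)+e_2f_2'(z_2^0)$ independently of how $H\to 0$. In the ($\Rightarrow$) direction, your path comparison ($h_2=h_1$ versus $h_2=\lambda h_1$) does extract $\partial_{z_2}g_1=\partial_{\bar z_2}g_1=0$ and, with the $h_2=h_1$ path, $\partial_{\bar z_1}g_1=0$ (symmetrically for $g_2$), and connectedness of $\Omega_1$, $\Omega_2$ upgrades local to global independence. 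That is the right skeleton.

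The one step that is not self-contained --- and you correctly identify it as the crux --- is the assertion that a bicomplex holomorphic function is real-differentiable. This does not follow formally from the definition of the bicomplex derivative, because the increment is restricted to lie outside $\mathcal{NC}\cup\{0\}$: the estimate $f(Z+H)-f(Z)=f'(Z)H+o(|H|)$ is only available for invertible $H$, and one cannot even deduce continuity of $f$ along the singular directions $e_1h_1$ and $e_2h_2$ without additional input. This is precisely why \cite{ZZ} and \cite{KK} either build continuity (or $C^1$-regularity) into the definition of bicomplex holomorphy or establish it separately before deriving the idempotent splitting via the bicomplex Cauchy--Riemann system. Since the paper itself outsources the entire proof to those same references, importing the regularity statement from them is a defensible move; but in a self-contained write-up you would have to either add that hypothesis explicitly or supply the regularity argument, and your alternative route via $\mathbb{B}\mathbb{C}$-analyticity hides the same dependency inside the words ``first establish.''
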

 For further details on bicomplex holomorphic functions, (see, \cite{YY}, \cite{H_6H_6}, \cite{MM}, \cite{ZZ}, \cite{KK}, \cite{Z_1Z_1}).\\ G. B. Price book \cite{KK} contains an extensive survey of the various fundamental properties of bicomplex numbers and bicomplex function theory. Also, D. Alpay et al. \cite{YY} have given a interesting detail survey of bicomplex functional analysis with the introduction of different norms and inner-products on the bicomplex modules. Recently, in \cite{Hahn}, the Hahn-Banach theorems and their consequences for $\mathbb D$-modules and $\mathbb {BC}$-modules are proved. Further, some properties of bicomplex linear operators over bicomplex Hilbert spaces are discussed in \cite{KS}.     
  \end{section}
\begin{section} {Bicomplex $C\textbf{*}$-algebras}
In this section we define bicomplex $C\textbf{*}$-algebra with $\mathbb D$-valued as well as with real-valued norm. Further, we discuss some of its properties. 
\begin{definition} A bicomplex algebra $\mathcal A$ is a module over $\mathbb {BC}$  with a multiplication defined on it which satisfy the following properties\;:    
\begin{enumerate}
\item[(i)] $x(y+z)=xy+xz$ 
\item[(ii)] $(x+y)z=xy+yz$
\item[(iii)] $x(yz)=(xy)z$ 
\item[(iv)] $\lambda(xy)=(\lambda x)y=x(\lambda y)$
   
\end{enumerate}
for all $x,\;y,\;z \in \mathcal A$ and all scalars  $\lambda\in \mathbb {BC}$.
\end{definition}

 \begin{definition} A bicomplex algebra $\mathcal A$ with a $\mathbb D$-valued norm $\|.\|_\mathbb D$ relative to which $\mathcal A$ is a Banach module and such that for every  $x,y$ in $\mathcal A$,   
$$ \|xy\|_\mathbb D \leq' \|x\|_\mathbb D\|y\|_\mathbb D$$ 
is called a $\mathbb D$-normed bicomplex Banach algebra.
\end{definition}
\begin{remark} A bicomplex algebra $\mathcal A$ with a real-valued norm $\|.\|$ relative to which $\mathcal A$ is a Banach module and for every $x,y$ in $\mathcal A$,   $$\|xy\|\leq \sqrt{2}\;\|x\|\|y\|$$ is called a bicomplex Banach algebra.
\end{remark}

\begin{definition} A mapping $x\longrightarrow x\textbf{*}$ of a bicomplex algebra $\mathcal A$ into $\mathcal A$ is called an involution on $\mathcal A$ if the following properties hold for all $x,y\in \mathcal A$ and $\lambda \in \mathbb B \mathbb{C}$\;:
\begin{enumerate}

\item[(i)] $(x^\textbf{*})^\textbf{*}=x$
\item[(ii)] $(xy)^\textbf{*}=y^\textbf{*} x^\textbf{*}$
\item[(iii)] $(\lambda x+y)^\textbf{*}=\lambda^{\dagger_3} x^\textbf{*}+y^\textbf{*}$.
\end{enumerate}
Involution function can also be defined with $\dagger_1$ and $\dagger_2$-conjugations. For more details one can refer to (\cite [P. 40] {YY}). Note that in this paper we used involution function with $\dagger_3$-conjugation. 
\end{definition}
\begin{remark} It is easy to show that a bicomplex algebra $\mathcal A$ can be written as $\mathcal A=e_1\mathcal A_1+e_2\mathcal A_2$, where $\mathcal A_1,\; \mathcal A_2$ are the algebras over $\mathbb C(i)$. Thus for any $x\in \mathcal A$, we can write $x=e_1x_1+e_2x_2$, where $x_1\in \mathcal A_1$ and $x_2\in \mathcal A_2$. Further,
 
$x\textbf *=(e_1x_1+e_2x_2)\textbf *
 =e_1^{\dagger_3}x_1\textbf *+e_2^{\dagger_3}x_2\textbf *=e_1x_1\textbf *+e_2x_2\textbf *.$
  
 Hence $_\textbf{*}$ is an involution on a bicomplex algebra $\mathcal A$ if and only if $_\textbf{*}$ is an involution on $\mathcal A_1$ and $\mathcal A_2$.
 \end{remark} 
 \begin{definition} A $\mathbb D$-normed bicomplex Banach algebra $\mathcal A$ with an involution such that for every $x$ in $\mathcal A$
$$\|x\textbf{*}x\|_\mathbb D=\|x\|^2_\mathbb D$$ is called a $\mathbb D$-normed bicomplex $C\textbf{*}$-algebra.
\end{definition}
\begin{remark} 
 A bicomplex Banach-algebra with an involution such that for every $x$ in $\mathcal A$,
$$\|x\|^2\leq\|x\textbf{*}x\|\leq \sqrt{2}\|x\|^2$$ is called a bicomplex $C\textbf{*}$-algebra. 
\end{remark} 
\begin{theorem} Let $\mathcal A$ be a $\mathbb D$-normed bicomplex algebra and $\mathcal A=e_1\mathcal A_1+e_2\mathcal A_2$ be its idempotent decomposition. Then $\mathcal A$ is a $\mathbb D$-normed bicomplex $C\textbf{*}$-algebra if and only if $\mathcal A_1,\;\mathcal A_2$ are complex $C\textbf{*}$-algebras.
\end{theorem}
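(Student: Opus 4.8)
The plan is to prove the equivalence by decomposing everything through the idempotent representation $\mathcal A = e_1\mathcal A_1 + e_2\mathcal A_2$ and reducing each defining axiom of a $\mathbb D$-normed bicomplex $C\textbf{*}$-algebra to the corresponding pair of axioms on the components $\mathcal A_1$ and $\mathcal A_2$. By Theorem~1.1 (together with the subsequent remark), $\mathcal A$ is a $\mathbb D$-normed bicomplex Banach module if and only if $\mathcal A_1$ and $\mathcal A_2$ are complex Banach spaces, so the only thing left to track is how the multiplicative norm inequality $\|xy\|_{\mathbb D}\leq' \|x\|_{\mathbb D}\|y\|_{\mathbb D}$, the involution, and the $C\textbf{*}$-identity $\|x\textbf{*}x\|_{\mathbb D}=\|x\|_{\mathbb D}^2$ behave under the decomposition.

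First I would record the key algebraic facts: for $x=e_1x_1+e_2x_2$ and $y=e_1y_1+e_2y_2$ we have $xy = e_1 x_1y_1 + e_2 x_2 y_2$ (using $e_1e_2=0$, $e_i^2=e_i$), $\|x\|_{\mathbb D}=e_1\|x_1\|_1 + e_2\|x_2\|_2$ by (1.6), and $x\textbf{*}=e_1 x_1\textbf{*}+e_2 x_2\textbf{*}$ by Remark~2.2. Then, since the product of two hyperbolic numbers is computed componentwise, $\|x\|_{\mathbb D}\|y\|_{\mathbb D}=e_1\|x_1\|_1\|y_1\|_1 + e_2\|x_2\|_2\|y_2\|_2$, and likewise $\|x\textbf{*}x\|_{\mathbb D}=e_1\|x_1\textbf{*}x_1\|_1 + e_2\|x_2\textbf{*}x_2\|_2$ and $\|x\|_{\mathbb D}^2 = e_1\|x_1\|_1^2 + e_2\|x_2\|_2^2$. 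The crucial order-theoretic observation is that for $\alpha=e_1\alpha_1+e_2\alpha_2$ and $\beta=e_1\beta_1+e_2\beta_2$ in $\mathbb D$, one has $\alpha\leq'\beta$ if and only if $\alpha_1\le\beta_1$ and $\alpha_2\le\beta_2$ (immediate from the definition of $\mathbb D^+$). Hence the submultiplicativity $\|xy\|_{\mathbb D}\leq'\|x\|_{\mathbb D}\|y\|_{\mathbb D}$ for all $x,y\in\mathcal A$ is equivalent to $\|x_iy_i\|_i\le\|x_i\|_i\|y_i\|_i$ for all $x_i,y_i\in\mathcal A_i$, $i=1,2$, and the $C\textbf{*}$-identity on $\mathcal A$ splits into the $C\textbf{*}$-identities $\|x_i\textbf{*}x_i\|_i=\|x_i\|_i^2$ on each $\mathcal A_i$. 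Combining this with Remark~2.2 (involution on $\mathcal A$ $\Leftrightarrow$ involution on $\mathcal A_1$ and $\mathcal A_2$) and Theorem~1.1 gives both directions of the stated equivalence.

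For the forward direction I would start from a $\mathbb D$-normed bicomplex $C\textbf{*}$-algebra $\mathcal A$, extract from Theorem~1.1 that $\mathcal A_1,\mathcal A_2$ are complex Banach spaces, note they are complex algebras (Remark~2.2), use the componentwise characterization of $\leq'$ applied to elements supported in a single idempotent (i.e. take $y$ with $y_2=0$, etc.) to get submultiplicativity on each factor, use Remark~2.2 to get an involution on each factor, and finally specialize $\|x\textbf{*}x\|_{\mathbb D}=\|x\|_{\mathbb D}^2$ to $x=e_1x_1$ and $x=e_2x_2$ to recover the scalar $C\textbf{*}$-identities; thus each $\mathcal A_i$ is a complex $C\textbf{*}$-algebra. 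The converse just reassembles: given complex $C\textbf{*}$-algebras $\mathcal A_1,\mathcal A_2$, the module $\mathcal A=e_1\mathcal A_1+e_2\mathcal A_2$ is a $\mathbb D$-normed bicomplex Banach module by Theorem~1.1, the componentwise product and involution make it a bicomplex algebra with involution, and the displayed identities follow by writing out both sides componentwise.

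The main obstacle — more a point requiring care than a genuine difficulty — is the order-theoretic step: verifying that $\alpha\leq'\beta$ in $\mathbb D$ is exactly the conjunction of the two real inequalities on components, and then being careful that the ``for all $x,y$'' quantifier in the bicomplex submultiplicativity inequality really does force the componentwise inequalities separately (which is where one must plug in elements of the form $e_1x_1$, $e_2x_2$ rather than general $x$). Everything else is a routine bookkeeping exercise in pushing the idempotent decomposition through each axiom, and can be compressed substantially.
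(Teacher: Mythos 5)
Your proposal is correct and follows essentially the same route as the paper: decompose through the idempotent representation, use Theorem~1.1 for the Banach-module part and Remark~2.2 for the involution, and reduce both the submultiplicativity inequality and the $C\textbf{*}$-identity to their componentwise counterparts via the characterization of $\leq'$ on $\mathbb D$. Your extra care about specializing to elements of the form $e_1x_1$ and $e_2x_2$ to decouple the two component inequalities is a point the paper's chain of equivalences passes over silently, but it is the same argument.
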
  
\begin{proof}
By using Theorem 1.1, any $\mathbb D$-normed bicomplex Banach module $\mathcal A$ can be written as $\mathcal A=e_1\mathcal A_1+e_2\mathcal A_2$, where $\mathcal A_1,\; \mathcal A_2$ are complex Banach spaces. Let $x, y\in \mathcal A$ with $x=e_1x_1+e_2x_2,\; y=e_1y_1+e_2y_2$, where $x_1, y_1 \in \mathcal A_1$ and $x_2, y_2\in \mathcal A_2$. Then   
\begin{align*}   &~~~~~\|xy\|_\mathbb D \leq' \|x\|_\mathbb D\|y\|_\mathbb D\\
&\Leftrightarrow \|e_1x_1y_1+e_2x_2y_2\|_\mathbb D\leq'\|e_1x_1+e_2x_2\|_\mathbb D\|e_1y_1+e_2y_2 \|_\mathbb D\\
&\Leftrightarrow e_1\|x_1y_1\|_1 +e_2\|x_2y_2\|_2  \leq' e_1\|x_1\|_1\|y_1\|_1 +e_2\|x_2\|_2\|y_2\|_2\\
&\Leftrightarrow \|x_1y_1\|_1\leq\|x_1\|_1\|y_1\|_1\; \text{and}\; \|x_2y_2\|_2\leq\|x_2\|_2\|y_2\|_2. 
\end{align*} 
Hence $\mathcal A$ is a $\mathbb D$-normed bicomplex Banach algebra if and only if $\mathcal A_1,\;\mathcal A_2$ are complex Banach algebras. Further, for any $x\in \mathcal A,$
\begin{align*}   &~~~~~\|x\textbf{*}x\|_\mathbb D=\|x\|^2_\mathbb D\\
&\Leftrightarrow \|e_1x_1\textbf * x_1 +e_2x_2\textbf * x_2 \|_\mathbb D=\|e_1x_1+e_2x_2\|^2_\mathbb D\\
&\Leftrightarrow e_1\|x_1\textbf * x_1\| +e_2\|x_2\textbf * x_2\|=e_1\|x_1\|^2+e_2\|x_2\|^2\\
&\Leftrightarrow \|x_1\textbf * x_1\|=\|x_1\|^2\;\text{and}\;\|x_2\textbf * x_2\|=\|x_2\|^2. 
\end{align*}
Hence a $\mathbb D$-normed bicomplex $C\textbf{*}$-algebra can be written as $\mathcal A=e_1\mathcal A_1+e_2\mathcal A_2$, where $\mathcal A_1,\;\mathcal A_2$ are complex $C\textbf{*}$-algebras.  
\end{proof}   
\begin{proposition} Let $\mathcal A$ be a $\mathbb D$-normed bicomplex $C\textbf{*}$-algebra and $x\in \mathcal A$. Then $$\|x\textbf *\|_\mathbb D=\|x\|_\mathbb D.$$
\end{proposition}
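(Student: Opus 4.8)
The plan is to reduce the statement to its complex-analytic counterpart via the idempotent decomposition, exactly in the spirit of the proof of Theorem 2.2. By Theorem 2.2, a $\mathbb D$-normed bicomplex $C\textbf{*}$-algebra $\mathcal A$ decomposes as $\mathcal A = e_1\mathcal A_1 + e_2\mathcal A_2$ with $\mathcal A_1,\mathcal A_2$ complex $C\textbf{*}$-algebras, and by Remark 2.6 the involution acts componentwise: for $x = e_1x_1 + e_2x_2$ we have $x\textbf{*} = e_1x_1\textbf{*} + e_2x_2\textbf{*}$. Then by the definition (1.6) of the hyperbolic norm,
\[
\|x\textbf{*}\|_{\mathbb D} = e_1\|x_1\textbf{*}\|_1 + e_2\|x_2\textbf{*}\|_2, \qquad \|x\|_{\mathbb D} = e_1\|x_1\|_1 + e_2\|x_2\|_2,
\]
so the desired equality $\|x\textbf{*}\|_{\mathbb D} = \|x\|_{\mathbb D}$ holds in $\mathbb D$ if and only if $\|x_1\textbf{*}\|_1 = \|x_1\|_1$ and $\|x_2\textbf{*}\|_2 = \|x_2\|_2$ in $\R$ (using that $\{e_1,e_2\}$ is a basis, so a hyperbolic identity is equivalent to the pair of its real coordinate identities, as already exploited in Theorem 2.2).

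It therefore suffices to prove the isometry property $\|a\textbf{*}\| = \|a\|$ for an element $a$ of an arbitrary complex $C\textbf{*}$-algebra — this is the classical fact, but since the paper is self-contained in spirit I would include the short standard argument. From the $C\textbf{*}$-identity $\|a\textbf{*}a\| = \|a\|^2$ and submultiplicativity,
\[
\|a\|^2 = \|a\textbf{*}a\| \le \|a\textbf{*}\|\,\|a\|,
\]
so dividing by $\|a\|$ (the case $a = 0$ being trivial) gives $\|a\| \le \|a\textbf{*}\|$. Applying this inequality to $a\textbf{*}$ in place of $a$ and using $(a\textbf{*})\textbf{*} = a$ yields $\|a\textbf{*}\| \le \|a\|$, hence equality.

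Assembling the two pieces: apply the complex result to $x_1 \in \mathcal A_1$ and to $x_2 \in \mathcal A_2$ to get $\|x_1\textbf{*}\|_1 = \|x_1\|_1$ and $\|x_2\textbf{*}\|_2 = \|x_2\|_2$, then recombine through the idempotent representation of the hyperbolic norm to conclude $\|x\textbf{*}\|_{\mathbb D} = \|x\|_{\mathbb D}$. I do not anticipate a genuine obstacle here; the only point requiring a little care is making explicit that componentwise real-norm equalities are equivalent to the single $\mathbb D$-valued equality — but this is precisely the bookkeeping already established in the proof of Theorem 2.2, so it can be invoked rather than redone. One could alternatively try to argue directly with the $\mathbb D$-valued $C\textbf{*}$-identity $\|x\textbf{*}x\|_{\mathbb D} = \|x\|_{\mathbb D}^2$ and the submultiplicativity $\|x\textbf{*}x\|_{\mathbb D} \le' \|x\textbf{*}\|_{\mathbb D}\|x\|_{\mathbb D}$, but then "dividing" by $\|x\|_{\mathbb D}$ is delicate when $\|x\|_{\mathbb D}$ is a zero divisor in $\mathbb D$, so the component-wise route is cleaner and is the one I would present.
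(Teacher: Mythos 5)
Your proof is correct. The paper states this proposition without any proof at all, and your argument — reducing to the idempotent components via the decomposition $\mathcal A = e_1\mathcal A_1 + e_2\mathcal A_2$ into complex $C\textbf{*}$-algebras, invoking the classical isometry of the involution there, and recombining through the $\mathbb D$-valued norm — is exactly the argument the paper's preceding theorem on the idempotent decomposition implicitly relies on, so it fills the gap in precisely the intended style. Your closing caution about not dividing by a possibly zero-divisor $\|x\|_{\mathbb D}$ in a direct $\mathbb D$-valued argument is well taken and is a good reason to prefer the componentwise route.
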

 Now we give some $\textbf{examples}$ of bicomplex $C^\textbf{*}$- algebras.\\\\
(i) Consider the set $\mathbb {BC}$  with the usual multiplication of bicomplex numbers. Then $\mathbb {BC}$ is a $\mathbb {BC}$-module. Let $|.|_k$ denotes the $\mathbb D$-norm over $\mathbb {BC}$. For each $Z\in\mathbb {BC}$, define an involution on $Z$ as $Z\textbf{*}=Z^{\dagger_3}$. Then $\mathbb {BC}$ is a $\mathbb D$-normed bicomplex $C^\textbf{*}$- algebra. Further, $\mathbb {BC}^n$ over $\mathbb {BC}$ also forms a $\mathbb D$-normed bicomplex $C^\textbf{*}$- algebra.\\\\  
(ii) Let $X$ be a bicomplex Hilbert space. Then the space $\mathcal B(X)$ of all $\mathbb D$-bounded linear operators from $X$ into itself with a $\mathbb D$-valued norm defined as $\|T\|_{\mathbb D}=\text{sup}\left\{ \|T(x)\|_{\mathbb D}\;|\;x\in X,\; \|x\|_{\mathbb D}\leq' 1\right\}$ is a $\mathbb D$-normed bicomplex $C^\textbf{*}$- algebra, where for each $T$ in $\mathcal B(X)$, $T^\textbf{*}$ is the adjoint of $T$.\\\\
(iii)  Let $\Omega$ be a compact topological space. Let $C(\Omega,\mathbb {BC})$ denotes the space of all bicomplex-valued continuous functions on $\Omega$. Then  $C(\Omega,\mathbb {BC})$ with a $\mathbb D$-valued norm defined as $\|f\|_{\mathbb D}=\text{sup} \left\{|f(x)|_k\;|\;x\in \Omega\right\}$ is a $\mathbb D$-normed bicomplex $C^\textbf{*}$- algebra, where $f^\textbf{*}(x)=f^{\dagger_3}(x)$, $\forall\; f \in C(\Omega,\mathbb {BC})$.\\\\ 
(iv) Let $\mathbb{U}_{\mathbb{B}\mathbb{C}}=\left\{Z=z+jw=e_1z_1+e_2z_2\;|\;(z_1,z_2)\in \mathbb{U}^2\right\}$
 be the unit disc in $\mathbb{B}\mathbb{C}$,
where $\mathbb{U}$ is the unit disk in the complex plane $\mathbb C(i)$. Let $H^\infty(\mathbb{U}_{\mathbb{B}\mathbb{C}})$ denotes the space of all $\mathbb D$-bounded bicomplex holomorphic functions with a $\mathbb D$-valued norm defined as $$\|f\|_{\mathbb D,H^\infty(\mathbb{U}_{\mathbb{B}\mathbb{C}})}=\text{sup}\left\{|f(x)|_k\;|\;x\in \mathbb{U}_{\mathbb{B}\mathbb{C}} \right\}.$$ Further, for each $f\in H^\infty(\mathbb{U}_{\mathbb{B}\mathbb{C}})$ and $Z\in \mathbb{U}_{\mathbb{B}\mathbb{C}}$, define an involution on $f$ as $f^\textbf{*}(Z)=f^{\dagger_3}(Z)$. Then $H^\infty(\mathbb{U}_{\mathbb{B}\mathbb{C}})$ is a $\mathbb D$-normed bicomplex $C^\textbf{*}$- algebra.

\begin{definition} Let $\mathcal A$ be a $\mathbb D$-normed bicomplex Banach algebra. Then a $\mathbb {BC}$-homomorphism $f:\mathcal A\rightarrow \mathbb {BC}$ is a $\mathbb {BC}$-linear map such that $f(xy)=f(x)f(y)$ \;$\forall\; x,y \in \mathcal A$.
\end{definition}   
\begin{definition} Let $\mathcal A$ and $\mathcal B$ be the two $\mathbb D$-normed bicomplex $C\textbf{*}$-algebras and $f:\mathcal A\rightarrow \mathcal B$ be a $\mathbb {BC}$-algebra homomorphism. Then $f$ is a $\mathbb {BC}$ $_\textbf *$-homomorphism if $$f(x\textbf*)=f(x)\textbf*,\;\; \forall\; x \in \mathcal A.$$
\end{definition}
\begin{remark} A $\mathbb {BC}$-linear map $f$ can be written as $f=e_1f_1+e_2f_2$, where $f_1, f_2$ are the $\mathbb C(i)$-linear maps over $\mathcal A_1, \mathcal A_2$ respectively (see, \cite{YY}, \cite{HH}). Now for any $x=e_1x_1+e_2x_2,\; y=e_1y_1+e_2y_2 \in \mathcal A$, 
  $$f(xy)=f(x)f(y) 
\Leftrightarrow f_1(x_1y_1)=f_1(x_1)f_1(y_1)\;\text{and}\; f_2(x_2y_2)=f_2(x_2)f_2(y_2).$$ Further, we observe that	    
\begin{align*}  \;\; &f(x\textbf*)=f(x)\textbf*\\ 
\Leftrightarrow & (f_1e_1+f_2e_2)(x_1\textbf*e_1+x_2\textbf*e_2)=(f_1(x_1)e_1+f_2(x_2)e_2)\textbf*\\
\Leftrightarrow & f_1(x_1\textbf*)= f_1(x_1)\textbf*\;\text{and}\; f_2(x_2\textbf*)= f_2(x_2)\textbf*.
\end{align*}
Thus $f$ is a $\mathbb {BC}$ $_\textbf *$-homomorphism over $\mathcal A$ if and only if $f_1$, $f_2$ are $_\textbf *$-homomorphisms over $\mathcal A_1$, $\mathcal A_2$ respectively. 
\end{remark}
\begin{definition} Let $\mathcal A$ be a $\mathbb D$-normed bicomplex $C\textbf{*}$-algebra and $x\in \mathcal A$. Then
\begin{enumerate}
\item[(i)]$x$ is said to be self-adjoint (hermitian) if $x=x\textbf *$.
\item[(ii)] $x$ is said to be normal if $x\textbf *x=xx\textbf *$.
\item[(iii)] $x$ is said to be unitary if $x\textbf *x=xx\textbf *=1$.
\end{enumerate}
\end{definition}
\begin{proposition} Let $\mathcal A$ be a $\mathbb D$-normed $C\textbf{*}$-algebra and let $x \in \mathcal A$.\\
$(i)$ If $x$ is invertible, then $x\textbf{*}$ is invertible and $(x\textbf{*})^{-1}$ = $(x^{-1})\textbf{*}$.\\
$(ii)$ $x=u+iv$, where $u$ and $v$ are hermitian elements of $\mathcal A$.\\  
$(iii)$ If $x$ is unitary element of $\mathcal A$, then $\|x\|_{\mathbb D}=1$.
\end{proposition}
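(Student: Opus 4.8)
The plan is to prove the three parts by the classical $C\textbf{*}$-algebra arguments, checking at each step that the $\dagger_3$-involution of Definition 2.4 behaves as required, and invoking the idempotent machinery (Remark 2.5, Theorem 2.8) for the norm statement in $(iii)$. Throughout, $\mathcal A$ is understood to be unital with $1\neq 0$, so that each component $\mathcal A_k$ is a unital complex $C\textbf{*}$-algebra.

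For $(i)$, I would first record that $1^\textbf{*}=1$: for every $y\in\mathcal A$, applying the involution to $1\cdot y=y=y\cdot 1$ and using Definition 2.4(ii) gives $y^\textbf{*}1^\textbf{*}=y^\textbf{*}=1^\textbf{*}y^\textbf{*}$, so $1^\textbf{*}$ is a two-sided identity and hence $1^\textbf{*}=1$ by uniqueness. Now if $x$ is invertible, apply the involution to $xx^{-1}=x^{-1}x=1$; by Definition 2.4(i),(ii) this yields $(x^{-1})^\textbf{*}x^\textbf{*}=x^\textbf{*}(x^{-1})^\textbf{*}=1$, so $x^\textbf{*}$ is invertible with $(x^\textbf{*})^{-1}=(x^{-1})^\textbf{*}$.

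For $(ii)$, I would take $u=\tfrac12(x+x^\textbf{*})$ and $v=\tfrac{1}{2i}(x-x^\textbf{*})$; since $i\in\mathbb C(i)\subseteq\mathbb{BC}$, these are genuine elements of $\mathcal A$. Using additivity, Definition 2.4(i),(iii), the identities $(\tfrac12)^{\dagger_3}=\tfrac12$ and $(\tfrac{1}{2i})^{\dagger_3}=\overline{\tfrac{1}{2i}}=-\tfrac{1}{2i}$, together with $(x-x^\textbf{*})^\textbf{*}=x^\textbf{*}-x$, a short computation gives $u^\textbf{*}=u$ and $v^\textbf{*}=v$; and $u+iv=\tfrac12(x+x^\textbf{*})+\tfrac12(x-x^\textbf{*})=x$.

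For $(iii)$, assume $x^\textbf{*}x=xx^\textbf{*}=1$. The $C\textbf{*}$-identity of Definition 2.6 gives $\|x\|_{\mathbb D}^2=\|x^\textbf{*}x\|_{\mathbb D}=\|1\|_{\mathbb D}$, so what remains is to show $\|1\|_{\mathbb D}=1$ and then to extract the unique non-negative hyperbolic square root; I expect this to be the only genuinely delicate point. I would argue via the idempotent decomposition: by Theorem 2.8, $\mathcal A=e_1\mathcal A_1+e_2\mathcal A_2$ with $\mathcal A_1,\mathcal A_2$ complex $C\textbf{*}$-algebras, and by Remark 2.5 the relation $x^\textbf{*}x=xx^\textbf{*}=1$ decomposes componentwise as $x_k^\textbf{*}x_k=x_kx_k^\textbf{*}=1$ in $\mathcal A_k$; hence each $x_k$ is a unitary in a complex $C\textbf{*}$-algebra, so $\|x_k\|_k=1$ by the classical theory, and therefore $\|x\|_{\mathbb D}=e_1\|x_1\|_1+e_2\|x_2\|_2=e_1+e_2=1$. (Intrinsically, one may instead note $\|1\|_{\mathbb D}=\|1^\textbf{*}1\|_{\mathbb D}=\|1\|_{\mathbb D}^2$, so $\|1\|_{\mathbb D}=e_1a_1+e_2a_2$ with $a_k\ge 0$ and $a_k^2=a_k$; since $\mathcal A_k\neq\{0\}$ we get $a_k\neq 0$, whence $a_k=1$, and then from $\|x\|_{\mathbb D}^2=1$ with $\|x\|_{\mathbb D}=e_1b_1+e_2b_2\in\mathbb D^+$ we get $b_k^2=1$ with $b_k\ge 0$, so $b_k=1$ and $\|x\|_{\mathbb D}=1$.)
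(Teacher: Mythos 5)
Your proof is correct. The only part the paper actually argues is $(ii)$, and there it takes a different route from yours: it writes $x=e_1x_1+e_2x_2$, invokes the classical decomposition $x_l=u_l+iv_l$ into hermitian elements inside each complex $C\textbf{*}$-algebra $\mathcal A_l$, and reassembles $u=e_1u_1+e_2u_2$, $v=e_1v_1+e_2v_2$. You instead give the intrinsic construction $u=\tfrac12(x+x\textbf{*})$, $v=\tfrac{1}{2i}(x-x\textbf{*})$ and verify hermiticity directly from the axiom $(\lambda x+y)\textbf{*}=\lambda^{\dagger_3}x\textbf{*}+y\textbf{*}$ together with $\left(\tfrac{1}{2i}\right)^{\dagger_3}=-\tfrac{1}{2i}$. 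Both are valid; yours is self-contained and avoids appealing to Theorem 2.8 and the classical fact, while the paper's is in line with its systematic ``reduce to idempotent components'' strategy (unwinding your formulas componentwise recovers exactly the classical $u_l,v_l$). For $(i)$ and $(iii)$ the paper merely asserts they ``hold trivially''; your write-up supplies the standard arguments, and your attention in $(iii)$ to showing $\|1\|_{\mathbb D}=1$ and to extracting the unique non-negative hyperbolic square root is precisely the detail the paper elides.
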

\begin{proof} Properties (i) and (iii) holds trivially. For (ii), let $x\in \mathcal A$. Then we can write $x=e_1x_1+e_2x_2$, where $x_1, x_2\in \mathcal A_1, \mathcal A_2$ respectively.
 Now $x_l\in \mathcal A_l$ and $\mathcal A_l$ is a complex $C\textbf{*}$-algebra, we can write $x_l=u_l+iv_l$, where $u_l, v_l$ are the hermitian elements of $A_l$ for $l=1,2$. Thus,  
 \begin{align*} x&=e_1x_1+e_2x_2
 =e_1(u_1+iv_1)+e_2(u_2+iv_2)\\
 &=(e_1u_1+e_2u_2)+i(e_1v_1+e_2v_2)=u+iv,
 \end{align*}
 where $u=e_1u_1+e_2u_2$ and $v=e_1v_1+e_2v_2$ are the hermitian elements of $\mathcal A$.
 \end{proof}
 \begin{proposition} Let $\mathcal A$ be a $\mathbb D$-normed bicomplex $C\textbf{*}$-algebra and $f:\mathcal A\rightarrow \mathbb {BC}$ be a non-zero $\mathbb {BC}$-homomorphism. Then\\
$(i)$ $f(x)\in \mathbb D$, whenever $x=x\textbf{*}$.\\
$(ii)$ $f(x\textbf {*})= (f(x))^{\dagger_3},\; \forall\; x \in \mathcal A$.\\
$(iii)$ $f(x\textbf{*}x) \in \mathbb D^+,\; \forall\; x \in \mathcal A$.\\ 
$(iv)$ if $1\in \mathcal A$ and $x$ is unitary element of $A$, then $|f(x)|_k=1$. 
\end{proposition}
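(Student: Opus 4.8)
The plan is to descend to the idempotent components and invoke the classical theory of characters on complex $C^*$-algebras. By Theorem 2.8 write $\mathcal{A}=e_1\mathcal{A}_1+e_2\mathcal{A}_2$ with $\mathcal{A}_1,\mathcal{A}_2$ complex $C^*$-algebras, and by Remark 2.12 write $f=e_1f_1+e_2f_2$, where $f_l\colon\mathcal{A}_l\to\mathbb{C}(i)$ is a $\mathbb{C}(i)$-algebra homomorphism for $l=1,2$. Two elementary facts about idempotent coordinates will be used repeatedly. First, for $Z=e_1z_1+e_2z_2$ one has $Z^{\dagger_3}=e_1\overline{z_1}+e_2\overline{z_2}$ (forced by $|Z|_k^2=Z\,Z^{\dagger_3}$ together with $e_1e_2=0$, $e_1^2=e_1$, $e_2^2=e_2$), so that $Z\in\mathbb{D}\iff z_1,z_2\in\mathbb{R}\iff Z=Z^{\dagger_3}$. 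Second, by Remark 2.5 the involution acts componentwise, $x^*=e_1x_1^*+e_2x_2^*$, and the notions of self-adjoint, normal and unitary for $x$ (Definition 2.13) are equivalent to the corresponding ones for $x_1$ and for $x_2$.

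I would prove $(i)$ and $(ii)$ simultaneously from the single classical input that a homomorphism from a complex $C^*$-algebra to $\mathbb{C}$ is automatically a $*$-homomorphism, i.e. $f_l(x_l^*)=\overline{f_l(x_l)}$ for all $x_l\in\mathcal{A}_l$ (trivial if $f_l=0$; otherwise it is the standard argument that a character sends self-adjoint elements to reals, using boundedness and the $C^*$-identity). Then $f(x^*)=e_1f_1(x_1^*)+e_2f_2(x_2^*)=e_1\overline{f_1(x_1)}+e_2\overline{f_2(x_2)}=(f(x))^{\dagger_3}$, which is $(ii)$; and if $x=x^*$ then $f(x)=f(x^*)=(f(x))^{\dagger_3}$, so $f(x)\in\mathbb{D}$, which is $(i)$.

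For $(iii)$, multiplicativity of $f_l$ together with $(ii)$ gives $f(x^*x)=e_1f_1(x_1^*)f_1(x_1)+e_2f_2(x_2^*)f_2(x_2)=e_1|f_1(x_1)|^2+e_2|f_2(x_2)|^2$, whose idempotent components are non-negative reals, hence $f(x^*x)\in\mathbb{D}^+$ by the definition of the non-negative cone. For $(iv)$, $1\in\mathcal{A}$ forces $\mathcal{A}_1,\mathcal{A}_2$ to be unital (the identity decomposes as $e_1\cdot 1+e_2\cdot 1$), and $x$ unitary makes $x_1,x_2$ unitary; a nonzero homomorphism $\phi$ on a unital algebra satisfies $\phi(1)=1$ (from $\phi(1)=\phi(1)^2$ and $\phi\neq 0$), so $|\phi(u)|^2=\phi(u^*)\phi(u)=\phi(u^*u)=\phi(1)=1$ for unitary $u$; applying this to $f_1(x_1)$ and $f_2(x_2)$ yields $|f(x)|_k=e_1|f_1(x_1)|+e_2|f_2(x_2)|=e_1+e_2=1$.

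There is no genuine obstacle here — the whole content is the reduction — but one point needs care in $(iv)$: a nonzero $\mathbb{BC}$-homomorphism $f=e_1f_1+e_2f_2$ may have one component, say $f_2$, identically zero, in which case $|f(x)|_k=e_1\neq 1$. So $(iv)$ is correct only under the convention that both $f_1$ and $f_2$ are nonzero, which, when $1\in\mathcal{A}$, is equivalent to $f(1)=1$ and should be read into the hypothesis; parts $(i)$–$(iii)$ require no such restriction, since a vanishing component contributes $0$, which lies in $\mathbb{R}$ and in $\mathbb{D}^+$.
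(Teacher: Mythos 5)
Your proof follows essentially the same route as the paper's: decompose $f=e_1f_1+e_2f_2$ and $\mathcal A=e_1\mathcal A_1+e_2\mathcal A_2$, and apply the classical facts about characters of complex $C^*$-algebras componentwise for each of (i)--(iv). Your caveat on (iv) is a genuine improvement over the paper, which dismisses the case where one of $f_1,f_2$ vanishes as ``trivially'' handled by classical results: if, say, $f_2=0$, then $f(1)=e_1$ and $|f(x)|_k=e_1|f_1(x_1)|=e_1\neq 1$ for unitary $x$, so (iv) as stated really does require both components to be non-zero (equivalently, $f(1)=1$), exactly as you observe; parts (i)--(iii) survive a vanishing component, as you also note.
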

\begin{proof} Let $f=e_1f_1+e_2f_2$ be a non-zero $\mathbb {BC}$-homomorphism on $\mathcal A$, where $f_1, f_2$ are homomorphisms on $\mathcal A_1,\; \mathcal A_2$ respectively. If either one of $f_1$ or $f_2$ are zero homomorphism, then the proof trivially holds by using the classical results. Suppose that $f_1, f_2$ both are non-zero homomorphisms.\\
$(i)$ Let $x\in \mathcal A$ such that $x=x\textbf{*}$. This implies that $x_1=x_1\textbf{*}$ and $x_2=x_2\textbf{*}$. Further, for a non-zero homomorphism $f_l: \mathcal A_l\rightarrow \mathbb C(i)$, we have $f_l(x_l)\in \mathbb R$, $l=1,2$. Thus, $f(x)= e_1f_1(x_1)+e_2f_2(x_2)\in \mathbb D$.\\  
$(ii)$ For each $x\in \mathcal A$, we can write $x\textbf{*}=e_1x_1\textbf{*}+e_2x_2\textbf{*}$. Further, for a non-zero homomorphism $f_l: \mathcal A_l\rightarrow \mathbb C(i)$, we have $f_1(x_1\textbf{*})=\overline {f_1(x_1)},\;\forall \;x_1 \in \mathcal A_1$ and $f_2(x_2\textbf{*})=\overline {f_2(x_2)},\;\forall \;x_2 \in \mathcal A_2$. Hence, 
\begin{align*} f(x\textbf{*})&= e_1f_1(x_1\textbf{*})+e_2f_2(x_2\textbf{*})\\
&=e_1\overline {f_1(x_1)}+e_2\overline {f_2(x_2)} =(f(x))^{\dagger_3},\; \forall\; x \in \mathcal A.
\end{align*}
$(iii)$ Since $f(x\textbf{*}x) =f(x\textbf{*})f(x)= (f(x))^{\dagger_3}f(x)=|f(x)|^2_k \in \mathbb D^+,\;\forall \;x\in \mathcal A$.
$(iv)$  If $x$ is unitary, then by using (iii), $|f(x)|^2_k= f(x\textbf{*}x)=f(1)=1$. Hence $|f(x)|_k=1$. 
 \end{proof}
\begin{definition} Let $\mathcal A$ be a bicomplex algebra and $\mathcal I$ be a subalgebra of $\mathcal A$. Then $\mathcal I$ is said to be an ideal (two-sided) of $\mathcal A$ if $xy\in \mathcal I$ and $yx\in \mathcal I$ whenever $x\in \mathcal A$ and $y\in \mathcal I$.
 We can also talk about the left as well as the right ideal of $\mathcal A$. Let $x\in \mathcal A$ and $y\in \mathcal I$, then the left ideal $\mathcal I$ of $\mathcal A$ is a subalgebra of $\mathcal A$ such that $xy\in \mathcal I$ and the right ideal $\mathcal I$ of $\mathcal A$ is a subalgebra of $\mathcal A$ such that $yx\in \mathcal I$. An ideal $\mathcal I\neq \mathcal A$ is called a proper ideal of $\mathcal A$.\end{definition}
\begin{remark}\label{ideal} (\cite [Definition 3.1] {KS}) Let $\mathcal A$ be a $\mathbb D$-normed bicomplex Banach algebra and $\mathcal I$ be an ideal of $\mathcal A$. Then $\mathcal A/\mathcal I$ is a bicomplex quotient module. Moreover, we can write bicomplex quotient module as $\mathcal A/\mathcal I=e_1\mathcal A_1/\mathcal I_1+e_2\mathcal A_2/\mathcal I_2$, where $\mathcal A_1/\mathcal I_1, \mathcal A_2/\mathcal I_2$ are the quotient spaces over $\mathbb C(i)$. Further, it is easy to show that if $\mathcal I$ is closed then $\mathcal A/\mathcal I$ is a $\mathbb D$-normed bicomplex Banach module with the $\mathbb D$-valued quotient norm defined as $\|x+I\|_\mathbb D=\text{inf}\left\{\|x-y\|_\mathbb D : y\in I\right\}$. 
\end{remark}
\begin{definition}\label{qtalg} Let $\mathcal A$ be a $\mathbb{B}\mathbb{C}$-algebra and $\mathcal I$ be an ideal of $\mathcal A$. Then we can write $\mathcal \mathcal \mathcal A=e_1\mathcal A_1+e_2\mathcal A_2$ and $\mathcal I=e_1\mathcal I_1+e_2\mathcal I_2$, where  $\mathcal I_1\;\text{and}\;\mathcal I_2$ are ideals of $\mathcal A_1\;\text{and}\;\mathcal A_2$ respectively. Moreover, $\mathcal A_l/\mathcal I_l$ for $l=1,2$ is the quotient algebra over $\mathbb C(i)$. 
Consider the set $\mathcal A/\mathcal I=\left\{x+\mathcal I:x\in \mathcal I\right\}$. Thus, with $x=e_1x_1+e_2x_2$, we can write $x+\mathcal I=e_1(x_1+\mathcal I_1)+e_2(x_2+\mathcal I_2),$ where $x_1+\mathcal I_1\in \mathcal A_1/\mathcal I_1$ and $x_2+\mathcal I_2\in \mathcal A_2/\mathcal I_2$. For any $x,\;y\in \mathcal A$, we define the multiplicative structure on $\mathcal A/\mathcal I$ by using the multiplication of the quotient algebra $\mathcal A_l/\mathcal I_l$ for $l=1,2$ as: 
\begin{align*} (x+\mathcal I)(y+\mathcal I)&=(e_1(x_1+\mathcal I_1)+e_2(x_2+\mathcal I_2))(e_1(y_1+\mathcal I_1)+e_2(y_2+\mathcal I_2))\\
&=e_1(x_1+\mathcal I_1)(y_1+\mathcal I_1)+e_2(x_2+\mathcal I_2)(y_2+\mathcal I_2)\\
&=e_1(x_1y_1+\mathcal I_1)+e_2(x_2y_2+\mathcal I_2)\\
&=xy+\mathcal I.
\end{align*}
It is easy to check that the multication defined above has the desired associative and distributive properties. Hence $\mathcal A/\mathcal I$ is a bicomplex quotient algebra.
Thus we can write the bicomplex quotient algebra as
$$ \mathcal A/\mathcal I = e_1\mathcal A_1/\mathcal I_1+e_2\mathcal A_2/\mathcal I_2,$$ where $\mathcal A_1/\mathcal I_1$ and $\mathcal A_2/\mathcal I_2$ are the quotient algebras over $\mathbb C(i)$.  
\end{definition}

\begin{theorem}\label{C*alg} Let $\mathcal A$ be a $\mathbb D$-normed bicomplex Banach algebra and $\mathcal I$ be a proper closed ideal of $\mathcal A$. Then $\mathcal A/\mathcal I$ is a $\mathbb D$-normed bicomplex Banach algebra.
\end{theorem}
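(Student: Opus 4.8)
The plan is to reduce the statement entirely to the classical theory of quotients of complex Banach algebras by passing to the idempotent decomposition. By Definition~\ref{qtalg} (together with Theorem~1.1 applied to $\mathcal A$ itself) we may write $\mathcal A = e_1\mathcal A_1 + e_2\mathcal A_2$ with $\mathcal A_1,\mathcal A_2$ complex Banach algebras, and $\mathcal I = e_1\mathcal I_1 + e_2\mathcal I_2$ with $\mathcal I_l$ an ideal of $\mathcal A_l$ for $l=1,2$, where $\mathcal I_l = e_l\mathcal I$. The first step is to check that the hypotheses descend to the components: since the idempotent projection $x\mapsto e_l x$ is contractive for the real-valued norm $\|\cdot\| = |\|\cdot\|_{\mathbb D}|$ of (1.7), closedness of $\mathcal I$ in $\mathcal A$ forces $\mathcal I_l$ to be closed in $\mathcal A_l$; moreover $\mathcal I \neq \mathcal A$ forces at least one of $\mathcal I_1,\mathcal I_2$ to be a proper ideal, and if one of them happens to equal the whole algebra the corresponding quotient is the zero algebra, which is harmless. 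Hence, by the classical fact that the quotient of a complex Banach algebra by a closed ideal is again a complex Banach algebra under the quotient norm $\|x_l+\mathcal I_l\|_l = \inf\{\,\|x_l-y_l\|_l : y_l\in\mathcal I_l\,\}$, each $\mathcal A_l/\mathcal I_l$ is a complex Banach algebra.

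The key technical point is to identify the $\mathbb D$-valued quotient norm of Remark~\ref{ideal} with the idempotent sum of the two complex quotient norms, that is, to establish
$$\|x+\mathcal I\|_{\mathbb D} \;=\; e_1\,\|x_1+\mathcal I_1\|_1 \;+\; e_2\,\|x_2+\mathcal I_2\|_2 .$$
Writing $x = e_1x_1+e_2x_2$ and letting $y = e_1y_1+e_2y_2$ range over $\mathcal I$, one has $\|x-y\|_{\mathbb D} = e_1\|x_1-y_1\|_1 + e_2\|x_2-y_2\|_2$; since $y_1$ and $y_2$ vary independently over $\mathcal I_1$ and $\mathcal I_2$ and the partial order $\leq'$ is exactly the coordinatewise order of the idempotent components, the greatest lower bound is attained coordinatewise, which yields the displayed formula. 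This is the only slightly delicate step: one must verify that the infimum of a $\mathbb D^+$-valued family with respect to the partial order $\leq'$ exists and separates into the two real infima, and this is precisely where the independence of the components $\mathcal I_1,\mathcal I_2$ of the ideal is used.

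With this identification the conclusion is mechanical. By Theorem~1.1 applied to $\mathcal A/\mathcal I = e_1(\mathcal A_1/\mathcal I_1) + e_2(\mathcal A_2/\mathcal I_2)$, the quotient is a $\mathbb D$-normed bicomplex Banach module, both components being complex Banach spaces. The multiplication on $\mathcal A/\mathcal I$ introduced in Definition~\ref{qtalg} decomposes as $e_1$ times the product of $\mathcal A_1/\mathcal I_1$ plus $e_2$ times the product of $\mathcal A_2/\mathcal I_2$, so submultiplicativity of each complex quotient norm, $\|x_ly_l+\mathcal I_l\|_l \leq \|x_l+\mathcal I_l\|_l\,\|y_l+\mathcal I_l\|_l$, together with the coordinatewise description of $\leq'$ gives $\|(x+\mathcal I)(y+\mathcal I)\|_{\mathbb D} \leq' \|x+\mathcal I\|_{\mathbb D}\,\|y+\mathcal I\|_{\mathbb D}$, which is exactly the criterion already used in the proof of Theorem~2.8. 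Therefore $\mathcal A/\mathcal I$ is a $\mathbb D$-normed bicomplex Banach algebra. I expect the bookkeeping in the degenerate cases (a component ideal equal to the full algebra) and the justification that the $\mathbb D$-valued infimum splits to be the only points requiring genuine care; everything else is a transcription of the complex-analytic facts through the idempotent decomposition.
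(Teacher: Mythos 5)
Your proposal is correct and follows essentially the same route as the paper: reduce to the idempotent components, invoke the classical fact that each $\mathcal A_l/\mathcal I_l$ is a complex Banach algebra, and recombine via the coordinatewise description of $\leq'$ to get submultiplicativity of the $\mathbb D$-valued quotient norm. The only difference is that you explicitly justify the splitting $\|x+\mathcal I\|_{\mathbb D}=e_1\|x_1+\mathcal I_1\|_1+e_2\|x_2+\mathcal I_2\|_2$ (and the degenerate case $\mathcal I_l=\mathcal A_l$), a step the paper's computation uses implicitly.
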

\begin{proof} Since we have seen that $\mathcal A/\mathcal I$ forms a bicomplex quotient algebra. Also from the Remark \ref{ideal}, $\mathcal A/\mathcal I$ is a $\mathbb D$-normed bicomplex quotient Banach module. Further, $\mathcal I_1$, $\mathcal I_2$ are the closed ideals of $\mathcal A_1$, $\mathcal A_2$ respectively implies that $\mathcal A_1/\mathcal I_1$ and $\mathcal A_2/\mathcal I_2$ are the quotient Banach algebras over $\mathbb C(i)$. To show that $\mathcal A/\mathcal I$ is a $\mathbb D$-normed bicomplex Banach algebra, we need to verify that for any $x, y\in \mathcal A$, $$\|(x+\mathcal I)(y+\mathcal I)\|_\mathbb D\leq^{'} \|x+\mathcal I\|_\mathbb D\|y+\mathcal I\|_\mathbb D.$$ Let $x, y\in \mathcal A$. Then
\begin{align*} \|(x+\mathcal I)(y+\mathcal I)\|_\mathbb D&=\|(e_1(x_1+\mathcal I_1)+e_2(x_2+\mathcal I_2))(e_1(y_1+\mathcal I_1)+e_2(y_2+\mathcal I_2))\|_\mathbb D\\
&=\|e_1(x_1+\mathcal I_1)(y_1+\mathcal I_1)+e_2(x_2+\mathcal I_2)(y_2+\mathcal I_2) \|_\mathbb D\\
&=e_1\|(x_1+\mathcal I_1)(y_1+\mathcal I_1) \|_1+e_2\|(x_2+\mathcal I_2)(y_2+\mathcal I_2)\|_2\\      
&\leq^{'} e_1(\|x_1+\mathcal I_1\|_1\|y_1+\mathcal I_1\|_1)+e_2(\|x_2+\mathcal I_2\|_2\|y_2+\mathcal I_2\|_2)\\
&=(e_1\|x_1+\mathcal I_1\|_1+e_2\|x_2+\mathcal I_2\|_2)(e_1\|y_1+\mathcal I_1\|_1+e_2\|y_2+\mathcal I_2\|_2)\\
&=\|x+\mathcal I\|_\mathbb D\|y+\mathcal I\|_\mathbb D.
\end{align*}
Hence $\mathcal A/\mathcal I$ is a $\mathbb D$-normed bicomplex quotient Banach algebra.
 \end{proof}
\begin{lemma} \label{c*} Let $\mathcal A$ be a $\mathbb D$-normed bicomplex $C\textbf{*}$-algebra and $\mathcal I$ be a closed ideal in $\mathcal A$. Then $\mathcal I$ is a self-adjoint ideal in $\mathcal A$.
 \end{lemma}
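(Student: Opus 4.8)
The plan is to transfer the statement to the idempotent components and then invoke the classical fact that every closed two-sided ideal of a complex $C\textbf{*}$-algebra is invariant under the involution. Write $\mathcal A=e_1\mathcal A_1+e_2\mathcal A_2$; by Theorem~2.8 (the characterization of $\mathbb D$-normed bicomplex $C\textbf{*}$-algebras via their idempotent components) the algebras $\mathcal A_1,\mathcal A_2$ are complex $C\textbf{*}$-algebras. By Definition~\ref{qtalg} (see also Remark~\ref{ideal}) the ideal decomposes as $\mathcal I=e_1\mathcal I_1+e_2\mathcal I_2$, where $\mathcal I_l$ is a two-sided ideal of $\mathcal A_l$ for $l=1,2$. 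First I would record that $\mathcal I_1$ and $\mathcal I_2$ are \emph{closed}: if $x_1^{(n)}\in\mathcal I_1$ and $x_1^{(n)}\to x_1$ in $\mathcal A_1$, then the elements $e_1x_1^{(n)}$ lie in $\mathcal I$ and converge to $e_1x_1$ in the associated real-valued norm of $\mathcal A$, so $e_1x_1\in\mathcal I$ and hence $x_1\in\mathcal I_1$; similarly for $\mathcal I_2$. This is the routine observation already alluded to in Remark~\ref{ideal}.

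Next I would invoke the classical theorem: a closed two-sided ideal $\mathcal I_l$ of a complex $C\textbf{*}$-algebra $\mathcal A_l$ satisfies $x_l\textbf{*}\in\mathcal I_l$ whenever $x_l\in\mathcal I_l$. The standard argument is that $\mathcal I_l$, being a closed ideal, is itself a $C\textbf{*}$-algebra and therefore carries a (self-adjoint) approximate identity $(u_\lambda)$; for $y\in\mathcal I_l$ the element $y\textbf{*}y$ lies in $\mathcal I_l$ and is self-adjoint, so $\|y-u_\lambda y\|^2=\|(1-u_\lambda)y\textbf{*}y(1-u_\lambda)\|\le\|y\textbf{*}y(1-u_\lambda)\|\to 0$, whence $u_\lambda y\to y$; passing to adjoints, $y\textbf{*}u_\lambda\to y\textbf{*}$, and since $u_\lambda\in\mathcal I_l$ and $\mathcal I_l$ is an ideal we have $y\textbf{*}u_\lambda\in\mathcal I_l$, so $y\textbf{*}\in\overline{\mathcal I_l}=\mathcal I_l$. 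Applying this for $l=1$ and $l=2$ gives that $\mathcal I_1$ and $\mathcal I_2$ are self-adjoint ideals in $\mathcal A_1$ and $\mathcal A_2$.

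Finally I would reassemble the components. Let $x\in\mathcal I$ and write $x=e_1x_1+e_2x_2$ with $x_1\in\mathcal I_1$, $x_2\in\mathcal I_2$. By the componentwise formula for the involution established in Remark~2.5, $x\textbf{*}=e_1x_1\textbf{*}+e_2x_2\textbf{*}$; since $x_1\textbf{*}\in\mathcal I_1$ and $x_2\textbf{*}\in\mathcal I_2$, it follows that $x\textbf{*}\in e_1\mathcal I_1+e_2\mathcal I_2=\mathcal I$. Thus $x\textbf{*}\in\mathcal I$ for every $x\in\mathcal I$, and applying the involution once more gives the reverse inclusion, so $\mathcal I$ is a self-adjoint ideal in $\mathcal A$.

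The only substantive ingredient is the classical result on self-adjointness of closed ideals, which rests on the existence of approximate identities in $C\textbf{*}$-algebras; everything else — the splitting of $\mathcal I$, the descent of closedness to $\mathcal I_1,\mathcal I_2$, and the componentwise action of the involution — is bookkeeping with the idempotent decomposition already developed in Section~2. Accordingly, I expect no genuine obstacle beyond carefully quoting the complex-variable theorem; the bicomplex layer contributes nothing difficult.
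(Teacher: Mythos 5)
The paper states Lemma \ref{c*} with no proof at all --- the lemma is simply quoted and then used in the subsequent theorem on quotient $C\textbf{*}$-algebras --- so your proposal supplies an argument where the authors supply none. Your route is the one they clearly intend and use everywhere else in Section 2: decompose $\mathcal I=e_1\mathcal I_1+e_2\mathcal I_2$ into closed two-sided ideals of the complex $C\textbf{*}$-algebras $\mathcal A_1,\mathcal A_2$ (Theorem 2.8 and Definition \ref{qtalg}), invoke the classical theorem that a closed two-sided ideal of a complex $C\textbf{*}$-algebra is self-adjoint, and reassemble via $x^{\textbf{*}}=e_1x_1^{\textbf{*}}+e_2x_2^{\textbf{*}}$ from Remark 2.5. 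The proof is correct, and your explicit check that closedness of $\mathcal I$ descends to $\mathcal I_1,\mathcal I_2$ is a detail worth keeping. One cosmetic slip in your recollection of the classical ingredient: with $a=y-u_\lambda y=(1-u_\lambda)y$ one has $\|a\|^2=\|aa^{\textbf{*}}\|=\|(1-u_\lambda)yy^{\textbf{*}}(1-u_\lambda)\|$, whereas the quantity $\|(1-u_\lambda)y^{\textbf{*}}y(1-u_\lambda)\|$ that you wrote equals $\|y-yu_\lambda\|^2$; since $\mathcal I_l$ is two-sided and the approximate identity can be taken two-sided, either one-sided version gives $y^{\textbf{*}}\in\mathcal I_l$, so the conclusion is unaffected.
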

 \begin{theorem} Let $\mathcal A$ be a $\mathbb D$-normed bicomplex $C\textbf{*}$-algebra and $\mathcal I$ be a proper closed ideal of $\mathcal A$. Then $\mathcal A/\mathcal I$ is a $\mathbb D$-normed bicomplex quotient $C\textbf{*}$-algebra.
\end{theorem}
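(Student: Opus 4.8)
The plan is to pass to the idempotent components and reduce the statement to the classical fact that a quotient of a complex $C\textbf{*}$-algebra by a proper closed two-sided ideal is again a complex $C\textbf{*}$-algebra. By Theorem 2.8 write $\mathcal A=e_1\mathcal A_1+e_2\mathcal A_2$ with $\mathcal A_1,\mathcal A_2$ complex $C\textbf{*}$-algebras, and, as in Definition \ref{qtalg}, write $\mathcal I=e_1\mathcal I_1+e_2\mathcal I_2$ where $\mathcal I_1,\mathcal I_2$ are two-sided ideals of $\mathcal A_1,\mathcal A_2$; from the idempotent decomposition of the module and of the $\mathbb D$-norm, $\mathcal I$ proper and closed forces each $\mathcal I_\ell\neq\mathcal A_\ell$ and each $\mathcal I_\ell$ closed. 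By Theorem \ref{C*alg} the quotient $\mathcal A/\mathcal I=e_1\mathcal A_1/\mathcal I_1+e_2\mathcal A_2/\mathcal I_2$ is already a $\mathbb D$-normed bicomplex Banach algebra (and, classically, each $\mathcal A_\ell/\mathcal I_\ell$ is a complex Banach algebra), so the only thing left is to produce an involution on $\mathcal A/\mathcal I$ obeying the $C\textbf{*}$-identity.

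Next I would define the involution on the quotient by $(x+\mathcal I)\textbf{*}=x\textbf{*}+\mathcal I$. This is well defined exactly because $\mathcal I$ is self-adjoint, which is Lemma \ref{c*}; equivalently each $\mathcal I_\ell$ is self-adjoint, so the classical quotient involution $(x_\ell+\mathcal I_\ell)\textbf{*}=x_\ell\textbf{*}+\mathcal I_\ell$ makes sense on $\mathcal A_\ell/\mathcal I_\ell$. Since $x\textbf{*}=e_1x_1\textbf{*}+e_2x_2\textbf{*}$, we obtain the idempotent splitting $(x+\mathcal I)\textbf{*}=e_1(x_1+\mathcal I_1)\textbf{*}+e_2(x_2+\mathcal I_2)\textbf{*}$, and then the three involution axioms of Definition 2.4 for $\mathcal A/\mathcal I$ follow, via the component criterion of Remark 2.5, from the corresponding classical facts for $\mathcal A_1/\mathcal I_1$ and $\mathcal A_2/\mathcal I_2$.

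The step that needs genuine care is the behaviour of the $\mathbb D$-valued quotient norm of Remark \ref{ideal}. I would first verify that
\begin{align*}
\|x+\mathcal I\|_{\mathbb D}=\inf\{\|x-y\|_{\mathbb D}:y\in\mathcal I\}=e_1\,\|x_1+\mathcal I_1\|_1+e_2\,\|x_2+\mathcal I_2\|_2 ,
\end{align*}
using that $\|x-y\|_{\mathbb D}=e_1\|x_1-y_1\|_1+e_2\|x_2-y_2\|_2$, that $y_1$ and $y_2$ range freely and independently over $\mathcal I_1$ and $\mathcal I_2$ as $y$ ranges over $\mathcal I$, and that $e_1a+e_2b\leq' e_1a'+e_2b'$ holds iff $a\le a'$ and $b\le b'$ — so the $\leq'$-infimum is computed coordinatewise. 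Granting this decomposition, the $C\textbf{*}$-identity for $\mathcal A/\mathcal I$ becomes
\begin{align*}
\|(x+\mathcal I)\textbf{*}(x+\mathcal I)\|_{\mathbb D}
&=e_1\,\|(x_1+\mathcal I_1)\textbf{*}(x_1+\mathcal I_1)\|_1+e_2\,\|(x_2+\mathcal I_2)\textbf{*}(x_2+\mathcal I_2)\|_2\\
&=e_1\,\|x_1+\mathcal I_1\|_1^2+e_2\,\|x_2+\mathcal I_2\|_2^2=\|x+\mathcal I\|_{\mathbb D}^2 ,
\end{align*}
where the middle equality is the classical $C\textbf{*}$-identity on each $\mathcal A_\ell/\mathcal I_\ell$. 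Combining this with Theorem \ref{C*alg} and the involution just constructed shows $\mathcal A/\mathcal I$ is a $\mathbb D$-normed bicomplex $C\textbf{*}$-algebra. I expect the coordinatewise infimum identity in the third step to be the only real obstacle; the rest is a routine componentwise transfer of classical $C\textbf{*}$-algebra theory through Theorem 2.8, Lemma \ref{c*}, and Theorem \ref{C*alg}.
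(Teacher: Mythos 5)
Your proposal is correct and follows essentially the same route as the paper: decompose into idempotent components, use Theorem \ref{C*alg} for the Banach-algebra structure, use Lemma \ref{c*} to define the quotient involution, and reduce the $C\textbf{*}$-identity to the classical one on each $\mathcal A_\ell/\mathcal I_\ell$. The one place you are more explicit than the paper is the coordinatewise computation of the $\mathbb D$-valued quotient norm, $\|x+\mathcal I\|_{\mathbb D}=e_1\|x_1+\mathcal I_1\|_1+e_2\|x_2+\mathcal I_2\|_2$, which the paper simply absorbs into Remark \ref{ideal}; your justification of it (independence of $y_1,y_2$ and the coordinatewise nature of $\leq'$-infima) is sound.
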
 
\begin{proof}We can conclude that $\mathcal A/\mathcal I$ is a $\mathbb D$-normed bicomplex quotient Banach algebra. Further, by using Lemma \ref{c*}, $\mathcal I$ is a self-adjoint ideal in $\mathcal A$. Define an involution on $\mathcal A/\mathcal I$ as $(x+\mathcal I)\textbf{*}=x\textbf{*}+\mathcal I$, for each $x+\mathcal I\in \mathcal A/\mathcal I$. Thus for $x=e_1x_1+e_2x_2$, $(x_l+\mathcal I_l)\textbf{*}=x_l\textbf{*}+\mathcal I_l$  defines an involution on $\mathcal A_l/\mathcal I_l$ for $l=1,2$. Hence $\mathcal A_1/\mathcal I_1$ and $\mathcal A_2/\mathcal I_2$ are complex $C\textbf{*}$-algebras. To show that $\mathcal A/\mathcal I$ is a $\mathbb D$-normed bicomplex $C\textbf{*}$-algebra, we must check that the $C\textbf{*}$-identity holds i.e., for all $x\in \mathcal A$, $$\|x+\mathcal I\|^2_\mathbb D=\|(x\textbf{*}+\mathcal I)(x+\mathcal I)\|_\mathbb D.$$     Let $x\in \mathcal A$. Then 
\begin{align*} \|(x\textbf{*}+\mathcal I)(x+\mathcal I)\|_\mathbb D&=\|x\textbf{*}x+\mathcal I\|_\mathbb D\\
&=\|(e_1x_1\textbf{*}+e_2x_2\textbf{*})(e_1x_1+e_2x_2)+(e_1\mathcal I_1+e_2\mathcal I_2)\|_\mathbb D\\
&=\|e_1(x_1\textbf{*}x_1+\mathcal I_1)+e_2(x_2\textbf{*}x_2+\mathcal I_2)\|_\mathbb D\\
&=e_1\|x_1\textbf{*}x_1+\mathcal I_1\|_1+e_2\|x_2\textbf{*}x_2+\mathcal I_2\|_2\\
&=e_1\|x_1+\mathcal I_1\|^2_1+e_2\|x_2+\mathcal I_2\|^2_2\\
&=\|x+\mathcal I\|^2_\mathbb D.
\end{align*}
 Hence the $C\textbf{*}$-identity holds.
 \end{proof}
\begin{example} Since $\mathbb {BC}$ is a $\mathbb D$-normed bicomplex $C^\textbf{*}$- algebra. Let $\mathcal I_1= e_1\mathbb {BC}$. Then $\mathcal I_1$ is a proper closed ideal in $\mathbb {BC}$. Hence $\mathbb {BC}/\mathcal I_1$ is a bicomplex quotient module. Further, $$\mathcal I_1\textbf{*}= (e_1\mathbb {BC})\textbf{*}=(e_1\mathbb {BC})^{\dagger_3}=e_1\mathbb {BC}=\mathcal I_1.$$ Thus $\mathcal I_1$ is a self-adjoint ideal in $\mathbb {BC}$. For any $Z\in \mathbb {BC}$, we define a quotient norm on $\mathbb {BC}/\mathcal I_1$ as:
\begin{align*} |Z+\mathcal I_1|_k&=|e_1z_1+e_2z_2+\mathcal I_1|_k\\
&=|e_2z_2+\mathcal I_1|_k=\text{inf}\left\{|e_2z_2+e_1x_1|_k\;:\;x_1\in \mathcal I_1\right\}\\
&=\text{inf}\left\{ e_2|z_2| +e_1|x_1|\;:\;x_1\in \mathcal I_1\right\}\\
&= e_2\;\text{inf}\;|z_2|+ e_1\;\text{inf}\left\{|x_1|\;:\;x_1\in \mathcal I_1\right\}= e_2|z_2|. 
\end{align*}
Moreover, the $C\textbf{*}$-identity easily follows with the quotient norm defined over $\mathbb {BC}/\mathcal I_1$. Hence $\mathbb {BC}/\mathcal I_1$ is a $\mathbb D$-normed bicomplex quotient  $C^\textbf{*}$- algebra.\\ Similarly for closed ideal $\mathcal I_2= e_2\mathbb {BC}$, $\mathbb {BC}/\mathcal I_2$ also forms a $\mathbb D$-normed bicomplex quotient  $C^\textbf{*}$- algebra.  
\end{example}
Now we discuss the maximal ideals in the ring $\mathbb {BC}$.
Cosider the bicomplex ring $\mathbb {BC}$. There are obviously only four ring homomorphisms $f:\mathbb {BC}\rightarrow \mathbb {BC}$ given as follows:\\
 (i) $f(Z)= Z$, $\forall \;Z\in \mathbb {BC}$, i.e., the identity homomorphism. In this case $Ker f=\left\{0\right\}$, that is, the zero ideal which is an improper ideal in $\mathbb {BC}$. Moreover, if $f(Z)=Z^{\dagger_1}$ or $Z^{\dagger_2}$ or $Z^{\dagger_3}$ then $f$ is a ring homomorphism with $Ker f=\left\{0\right\}$.\\\\
 (ii) $f(Z)=0$, $\forall \;Z\in \mathbb {BC}$, the zero homomorphism. Here $Ker f= \mathbb {BC}$ which is also an improper ideal in $\mathbb {BC}$.\\\\
 (iii) Define $f(Z)=e_1Z$ ( = $e_2Z$), $\forall \;Z\in \mathbb {BC}$. Then $f$ is a ring homomorphism with $Ker f= e_2\mathbb {BC}$ ( = $e_1\mathbb {BC}$ respectively) which is a proper ideal in $\mathbb {BC}$.\\\\
Hence in the ring  $\mathbb {BC}$, there exists two improper ideals and two proper ideals. Next we will show that the two proper ideals in $\mathbb {BC}$ are the maximal ideals in $\mathbb {BC}$.
 
Let $I_1=e_1\mathbb {BC}$ and $I_2=e_2\mathbb {BC}$. Then $I_1,I_2$ are proper ideals in $\mathbb {BC}$. Further, 
 \begin{align*}  \mathbb {BC}/I_1&=\left\{ I_1+Z\;|\;Z=e_1z_1+e_2z_2\in \mathbb {BC}\right\}\\
 &=\left\{ I_1+e_2z_2\;|\;Z=e_1z_1+e_2z_2\in \mathbb {BC}\right\}.
 \end{align*}
 For any $Z=e_1z_1+e_2z_2, W=e_1w_1+e_2w_2\in \mathbb {BC}$, we define addition and multiplication on $\mathbb {BC}/I_1$ as
 $$(I_1+Z) + (I_1+W)= I_1+(Z+W)= I_1+e_2(z_2+w_2),$$
 $$(I_1+Z)(I_1+W)= I_1+(ZW)= I_1+e_2(z_2w_2).$$
Then it is easy to show that $\mathbb {BC}/I_1$ is a commutative ring with unity $I_1+e_2$. Further, for any $Z=e_1z_1+e_2z_2\in \mathbb {BC}$, let $I_1+e_2z_2 \in \mathbb {BC}/I_1$. Since $z_2\in \mathbb C(i)$ and $\mathbb C(i)$ is a field, there exist some $w\in \mathbb C(i)$ such that $z_2w=wz_2=1$. Thus, $I_1+e_2w \in \mathbb {BC}/I_1$ such that $(I_1+e_2w) (I_1+e_2z_2)=(I_1+e_2z_2)(I_1+e_2w)=I_1+e_2$. Hence $\mathbb {BC}/I_1$ is a field.\\  
Thus using a well known result from algebra,\\ ``In a commutative ring R with unity, an ideal M is maximal in R iff R/M is a field", we find that $I_1$ is a maximal ideal in the ring $\mathbb {BC}$. Similarly, $I_2$ is a maximal ideal in the ring $\mathbb {BC}$.\\\\
   \textbf {Aliter} : $I_1, I_2$ are maximal ideals in  $\mathbb {BC}$ can also be proved by using the Fundamental Theorem of ring Homomorphism.\\ 
 Define a function $f:\mathbb {BC}\rightarrow \mathbb C(i)$ such that $f(Z)=f(z+jw)=z+iw$.\\ 
 Then $f$ is a ring homomorphism as $$f(Z+U)=f(Z)+f(U),$$
 $$f(ZU)=f(Z)f(U).$$ Moreover, $f$ is onto. Thus by Fundamental Theorem of homomorphism 
 $$ \mathbb {BC}/Ker f \cong \mathbb C(i).$$
 Now for any $Z\in \mathbb {BC}$,\begin{align*} Z=z+jw\in Ker f&\Leftrightarrow f(Z)=f(z+jw)=0\\
 &\Leftrightarrow z+iw=0\\
 &\Leftrightarrow Z=z+jiz\\
 &\Leftrightarrow Z\in I_1.
\end{align*}
Thus $Ker f= I_1$. Further, $\mathbb {BC}/Ker f \cong \mathbb C(i)$, but $\mathbb C(i)$ being a field, $\mathbb {BC}/Ker f$ will be a field. Hence $I_1$ is maximal ideal in $\mathbb {BC}$.\\ Similarly define a ring homomorphism $f:\mathbb {BC}\rightarrow \mathbb C(i)$ such that $$f(Z)=f(z+jw)=z-iw.$$ Then $I_2$ is also maximal in $\mathbb {BC}$.\\
  
\begin{remark} In a unital commutative bicomplex Banach algebra $\mathcal A$, for every non-zero multiplicative $\mathbb {BC}$-linear functional $f:\mathcal A \rightarrow \mathbb {BC}$, $Ker f$ need not be a maximal ideal of $\mathcal A$.
\end{remark} 
 For example,  consider a unital commutative bicomplex Banach algebra $\mathbb {BC}$ and define $f:\mathbb {BC} \rightarrow \mathbb {BC}$ by $f(Z)=Z$. Then $ker f=\left\{0\right\}$. But $\left\{0\right\}$ is not a maximal ideal in $\mathbb {BC}$ as it properly contained in $I_1$ and $I_2$.\\
 Thus there does not exist any one-to-one correspondance between non-zero multiplicative linear functionals and maximal ideals of bicomplex Banach algebra $\mathcal A$. 
  
\begin{remark}In a unital complex Banach algebra, a proper ideal does not contain an invertible element. But in a unital bicomplex Banach algebra, a proper ideal may contain an invertible element.
\end{remark}
For example, in the bicomplex Banach algebra $\mathbb {BC}$ with unity $1$, $I_1=e_1\mathbb {BC}$ is a proper ideal in $\mathbb {BC}$. Further, all the elements in $I_1$ are invertible elements with multiplicative identity $e_1$. For this, let $e_1Z=e_1(e_1z_1+e_2z_2)=e_1z_1\in I_1$. Since $z_1\in \mathbb C(i)$, there exist some $w\in \mathbb C(i)$ such that $z_1w=wz_1=1$. Thus $e_1w\in I_1$ with $e_1w.e_1z_1=e_1z_1.e_1w=e_1$.

\end{section}		 

   \begin{section} {Composition Operators on $H^2(\mathbb{U}_{\mathbb{B}\mathbb{C}})$} 
 In this section we study the composition operator on bicomplex Hardy space $H^2(\mathbb{U}_{\mathbb{B}\mathbb{C}})$ and discuss the growth properties of the functions in $H^2(\mathbb{U}_{\mathbb{B}\mathbb{C}})$.\\
Let $\mathbb{U}_{\mathbb{B}\mathbb{C}}=\left\{Z=z+jw=e_1z_1+e_2z_2\;|\;(z_1,z_2)\in \mathbb{U}^2\right\}$
 be the unit disc in $\mathbb{B}\mathbb{C}$,
where $\mathbb{U}$ is the unit disk in the complex plane $\mathbb C(i)$ and $\mathbb{U}^2=\mathbb{U}\times\mathbb{U}$. The bicomplex Hardy space $H^2(\mathbb{U}_{\mathbb{B}\mathbb{C}})$ is defined to be the space of all bicomplex holomorphic functions on the unit disc $\mathbb{U}_{\mathbb{B}\mathbb{C}}$ whose sequence of power series coefficients is square-summable (see, \cite [P. 92] {YY}).
\begin{definition} Let $f:\mathbb{U}_{\mathbb{B}\mathbb{C}}\rightarrow \mathbb{B}\mathbb{C}$ be a bicomplex holomorphic function and $\Phi:\mathbb{U}_{\mathbb{B}\mathbb{C}}\rightarrow \mathbb{U}_{\mathbb{B}\mathbb{C}}$ be a bicomplex holomorphic self map of $\mathbb{U}_{\mathbb{B}\mathbb{C}}$.  
 The composition operator $C_\Phi$ on $H^2(\mathbb{U}_{\mathbb{B}\mathbb{C}})$ induced by $\Phi$   is defined by $$ C_\Phi f=f\;o\;\Phi, $$ for every $f\in H^2(\mathbb{U}_{\mathbb{B}\mathbb{C}}).$
\end{definition}  
\begin{remark}\label{cmop} For any $Z=e_1z_1+e_2z_2\in \mathbb{U}_{\mathbb{B}\mathbb{C}} $, the bicomplex holomorphic function $f$ on $\mathbb{U}_{\mathbb{B}\mathbb{C}}$ can be written as $f(Z)=e_1f_1(z_1)+e_2f_2(z_2),$ where $f_1$ and $f_2$ are complex-valued holomorphic functions on the unit disc $\mathbb{U}$. Similarly we can write $\Phi(Z)=e_1\Phi_1(z_1)+e_2\Phi_2(z_2)$, where $\Phi_1$ and $\Phi_2$ are holomorphic self maps of the unit disc $\mathbb{U}$. Thus the composition operator $C_\Phi$ on $H^2(\mathbb{U}_{\mathbb{B}\mathbb{C}})$ can also be defined as $ C_\Phi= e_1C_{\Phi_1}+e_2 C_{\Phi_2},$ where $C_{\Phi_l}$ for $l=1,2$ is a composition operator on $H^2(\mathbb{U}).$ For details on composition operators, one can refer to \cite{EE}, \cite{SS} and the references therein. 
\end{remark} 
\begin{proposition}\label{GE} Let $H^2(\mathbb U_{\mathbb B \mathbb C})$ be the bicomplex Hardy space. Then for each $f \in H^2(\mathbb U_{\mathbb B \mathbb C})$,$$ |f(Z)|_k \leq' \frac{\|f\|_{\mathbb{D},{H^2(\mathbb{U}_{\mathbb{B}\mathbb{C}})}}}{\sqrt{1-|Z|^2_k}},\;\; \forall Z\in \mathbb U_{\mathbb B \mathbb C}.$$
\end{proposition}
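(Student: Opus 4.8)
The plan is to pass to the idempotent decomposition and reduce everything to the classical growth estimate for the scalar Hardy space $H^2(\mathbb{U})$. First, by Theorem 1.4 I would write $f(Z) = f(e_1 z_1 + e_2 z_2) = e_1 f_1(z_1) + e_2 f_2(z_2)$ with $f_1, f_2$ holomorphic on $\mathbb{U}$. The power series coefficients of $f$ decompose as $c_n = e_1 a_n + e_2 b_n$, and square-summability of $(c_n)$ is equivalent to square-summability of both $(a_n)$ and $(b_n)$; hence $f_1, f_2 \in H^2(\mathbb{U})$, and by the definition of the $\mathbb{D}$-valued norm (formulas (1.6), (1.9)) one has $\|f\|_{\mathbb{D},H^2(\mathbb{U}_{\mathbb{B}\mathbb{C}})} = e_1 \|f_1\|_{H^2(\mathbb{U})} + e_2 \|f_2\|_{H^2(\mathbb{U})}$.

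Next I would invoke the classical estimate: for $g(w) = \sum_{n \ge 0} a_n w^n \in H^2(\mathbb{U})$ and $w \in \mathbb{U}$, the Cauchy--Schwarz inequality gives $|g(w)| \le \sum_{n} |a_n|\,|w|^n \le \big( \sum_n |a_n|^2 \big)^{1/2} \big( \sum_n |w|^{2n} \big)^{1/2} = \|g\|_{H^2(\mathbb{U})}\,(1 - |w|^2)^{-1/2}$ (equivalently, Cauchy--Schwarz against the Szeg\H{o} reproducing kernel). Applying this to $f_1$ at $z_1$ and to $f_2$ at $z_2$ --- both legitimate since $(z_1, z_2) \in \mathbb{U}^2$ --- yields the two real inequalities $|f_l(z_l)| \le \|f_l\|_{H^2(\mathbb{U})} / \sqrt{1 - |z_l|^2}$ for $l = 1, 2$.

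Finally I would translate back to $\mathbb{B}\mathbb{C}$. Since $|Z|_k^2 = e_1 |z_1|^2 + e_2 |z_2|^2$, the hyperbolic number $1 - |Z|_k^2 = e_1(1 - |z_1|^2) + e_2(1 - |z_2|^2)$ lies in $\mathbb{D}^+$ and, as $|z_l| < 1$, it is invertible with $\sqrt{1 - |Z|_k^2} = e_1 \sqrt{1 - |z_1|^2} + e_2 \sqrt{1 - |z_2|^2}$; therefore $\|f\|_{\mathbb{D}} / \sqrt{1 - |Z|_k^2}$ is the well-defined hyperbolic number $e_1 \|f_1\|_{H^2(\mathbb{U})}(1 - |z_1|^2)^{-1/2} + e_2 \|f_2\|_{H^2(\mathbb{U})}(1 - |z_2|^2)^{-1/2}$. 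Combining this with $|f(Z)|_k = e_1 |f_1(z_1)| + e_2 |f_2(z_2)|$ and the fact that $e_1 \alpha_1 + e_2 \alpha_2 \le' e_1 \beta_1 + e_2 \beta_2$ holds precisely when $\alpha_1 \le \beta_1$ and $\alpha_2 \le \beta_2$, the claimed inequality is exactly the conjunction of the two scalar estimates from the previous step.

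I expect no serious obstacle: the analytic content is carried entirely by the classical $H^2(\mathbb{U})$ estimate, and the only care needed is the bookkeeping around the $\mathbb{D}$-valued objects --- confirming that the componentwise $H^2$ norms reassemble into $\|f\|_{\mathbb{D}}$, that the hyperbolic square root and quotient are legitimate (which is exactly why the statement is restricted to $Z \in \mathbb{U}_{\mathbb{B}\mathbb{C}}$, away from the zero-divisor set $\mathcal{NC}$), and that passing between the partial order $\le'$ and the pair of real inequalities is valid.
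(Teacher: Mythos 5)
Your proposal is correct and follows essentially the same route as the paper: idempotent decomposition $f = e_1 f_1 + e_2 f_2$, the classical growth estimate for each $f_l \in H^2(\mathbb{U})$ (which the paper simply cites from Shapiro's book rather than re-deriving via Cauchy--Schwarz), and reassembly of the two scalar inequalities into the $\leq'$ statement using $|Z|_k^2 = e_1|z_1|^2 + e_2|z_2|^2$. The extra care you take with the invertibility of $\sqrt{1-|Z|_k^2}$ is sound bookkeeping that the paper leaves implicit.
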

\begin{proof}  For each $f \in H^2(\mathbb U_{\mathbb B \mathbb C})$, we can write $f(Z)=e_1f_1(z_1)+e_2f_2(z_2)$, where $f_l(z_l)\in H^2(\mathbb U)$ for $l=1,2$. Further, by using the growth estimate of functions in $H^2(\mathbb U)$ (see \cite [P. 10] {SS}), each $f_l\in H^2(\mathbb U)$, $l=1,2$ satisfies $|f_l(z_l)|\leq \left(\frac{\|f_l\|_{H^2(\mathbb{U})}}{\sqrt{1-|z_l|^2}}\right)$.   
  Thus for each $Z\in \mathbb{U}_{\mathbb{B}\mathbb{C}}$, 	
\begin{align*}   |f(Z)|_k&=e_1|f_1(z_1)|+e_2|f_2(z_2)|\\
&\leq' e_1\left(\frac{\|f_1\|_{H^2(\mathbb{U})}}{\sqrt{1-|z_1|^2}}\right)+e_2\left(\frac{\|f_2\|_{H^2(\mathbb{U})}}{\sqrt{1-|z_2|^2}}\right)\\
&=\frac{\|f\|_{\mathbb{D},{H^2(\mathbb{U}_{\mathbb{B}\mathbb{C}})}}}{\sqrt{1-|Z|^2_k}}\;.
\end{align*}
Hence functions in bicomplex Hardy space also satisfy the growth condition. 
\end{proof}
\begin{corollary} Every $\mathbb D$-norm convergent squence in $H^2(\mathbb U_{\mathbb B \mathbb C})$ converges uniformly on compact subsets of $\mathbb U_{\mathbb B \mathbb C}$.
\end{corollary}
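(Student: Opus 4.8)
The plan is to reduce the statement to the classical fact about $H^2(\mathbb{U})$ via the idempotent decomposition, exactly as in the proof of Proposition~3.3. Recall that for a $\mathbb{D}$-norm convergent sequence $(f^{(n)})$ in $H^2(\mathbb{U}_{\mathbb{B}\mathbb{C}})$ with limit $f$, writing $f^{(n)}(Z) = e_1 f^{(n)}_1(z_1) + e_2 f^{(n)}_2(z_2)$ and $f(Z) = e_1 f_1(z_1) + e_2 f_2(z_2)$, the $\mathbb{D}$-valued norm decomposes as $\|f^{(n)} - f\|_{\mathbb{D},H^2(\mathbb{U}_{\mathbb{B}\mathbb{C}})} = e_1\|f^{(n)}_1 - f_1\|_{H^2(\mathbb{U})} + e_2\|f^{(n)}_2 - f_2\|_{H^2(\mathbb{U})}$. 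Since convergence to $0$ in the $\mathbb{D}$-valued norm (with respect to the order $\leq'$) is equivalent to $\|f^{(n)}_1 - f_1\|_{H^2(\mathbb{U})} \to 0$ and $\|f^{(n)}_2 - f_2\|_{H^2(\mathbb{U})} \to 0$ simultaneously, we immediately get that each component sequence converges in the classical Hardy space $H^2(\mathbb{U})$.

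Next I would invoke the classical result (a standard consequence of the growth estimate $|g(z)| \leq \|g\|_{H^2(\mathbb{U})}/\sqrt{1-|z|^2}$, see \cite[P.~10]{SS}) that norm convergence in $H^2(\mathbb{U})$ implies uniform convergence on compact subsets of $\mathbb{U}$. Applying this to $(f^{(n)}_l)$ for $l=1,2$ gives that $f^{(n)}_l \to f_l$ uniformly on every compact subset of $\mathbb{U}$.

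It then remains to transfer this back to $\mathbb{U}_{\mathbb{B}\mathbb{C}}$. The point is that a compact subset $K \subseteq \mathbb{U}_{\mathbb{B}\mathbb{C}}$ has images $K_1 = \{z_1 : e_1z_1 + e_2z_2 \in K\}$ and $K_2 = \{z_2 : e_1z_1 + e_2z_2 \in K\}$ which are compact in $\mathbb{U}$ (they are continuous images of $K$ under the projection maps $Z \mapsto z_1$, $Z \mapsto z_2$, which are continuous for the topology on $\mathbb{B}\mathbb{C}$). For $Z \in K$ we have, using the definition of the hyperbolic norm of a bicomplex number,
\begin{align*}
|f^{(n)}(Z) - f(Z)|_k &= e_1 |f^{(n)}_1(z_1) - f_1(z_1)| + e_2 |f^{(n)}_2(z_2) - f_2(z_2)| \\
&\leq' e_1 \sup_{w_1 \in K_1} |f^{(n)}_1(w_1) - f_1(w_1)| + e_2 \sup_{w_2 \in K_2} |f^{(n)}_2(w_2) - f_2(w_2)|,
\end{align*}
and both suprema tend to $0$ as $n \to \infty$ by the uniform convergence on $K_1$ and $K_2$ respectively. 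Hence $\sup_{Z \in K} |f^{(n)}(Z) - f(Z)|_k \to 0$ in the $\mathbb{D}^+$ sense, which is precisely uniform convergence on $K$ with respect to the hyperbolic norm. The only mild subtlety — the step I would treat most carefully — is confirming that the projections $\pi_l : \mathbb{B}\mathbb{C} \to \mathbb{C}(i)$, $Z \mapsto z_l$, are continuous so that $K_1, K_2$ are genuinely compact; this is clear since in the idempotent coordinates these projections are linear and $\mathbb{B}\mathbb{C} \cong \mathbb{C}(i) \oplus \mathbb{C}(i)$ as topological vector spaces, but it is worth stating explicitly. Everything else is a routine unwinding of the idempotent decomposition together with Proposition~3.3.
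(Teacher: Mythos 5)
Your proof is correct, but it takes a genuinely different route from the paper's. The paper stays at the bicomplex level: it applies the growth estimate of Proposition 3.3 directly to $f_n-f$ over the hyperbolic closed discs $\left\{Z : |Z|_k\leq' r\right\}$ with $r=e_1r_1+e_2r_2$, $0<r_1,r_2<1$, obtaining $\sup_{|Z|_k\leq' r}|f_n(Z)-f(Z)|_k\leq' \|f_n-f\|_{\mathbb{D},H^2(\mathbb{U}_{\mathbb{B}\mathbb{C}})}/\sqrt{1-r^2}$, and then concludes by saying that $r$ is arbitrary. You instead split into idempotent components, invoke the classical fact that norm convergence in $H^2(\mathbb{U})$ implies uniform convergence on compacta for each component separately, and reassemble. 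The two arguments rest on the same underlying growth estimate, so neither is more powerful, but yours makes explicit a point the paper glosses over: the passage from ``uniform on every disc $\{|Z|_k\leq' r\}$'' to ``uniform on every compact subset'' requires knowing that an arbitrary compact $K\subseteq\mathbb{U}_{\mathbb{B}\mathbb{C}}$ is contained in such a disc, which is exactly your observation that the continuous projections $Z\mapsto z_1$, $Z\mapsto z_2$ carry $K$ to compact sets $K_1,K_2\subseteq\mathbb{U}$, hence bounded away from the boundary. The paper's version is shorter and keeps the corollary visibly a one-line consequence of Proposition 3.3; your version is more careful about the topology of $\mathbb{U}_{\mathbb{B}\mathbb{C}}$ at the cost of re-deriving the classical corollary componentwise. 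Both are valid.
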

\begin{proof} Let $\left\{f_n\right\}$ be a sequence in $H^2(\mathbb U_{\mathbb B \mathbb C})$ converging in $\mathbb D$-norm to a function $f\in H^2(\mathbb U_{\mathbb B \mathbb C})$. Choose $r\in \mathbb D^+$ such that $r=r_1e_1+r_2e_2$ with $0<r_1<1$ and $0<r_2<1$. Consider the set $\left\{Z\in \mathbb U_{\mathbb B \mathbb C}\;|\; |Z|_k\leq' r\right\}=e_1\left\{z_1\in \mathbb U\;|\;|z_1|\leq r_1\right\}+e_2\left\{z_2\in \mathbb U\;|\;|z_2|\leq r_2\right\}$. Clearly $\left\{Z\in \mathbb U_{\mathbb B \mathbb C}\;|\; |Z|_k\leq' r\right\}$ is a closed set in $\mathbb U_{\mathbb B \mathbb C}$. Thus for each fixed $n$, Proposition \ref{GE} yields 
 $$\underset{|Z|_k \leq'r}{\mathop{\sup}} |f_{n}(Z)-f(Z)|_k  \leq' \frac{\|f_{n}-f\|_{\mathbb{D},{H^2(\mathbb{U}_{\mathbb{B}\mathbb{C}})}}}{\sqrt{1-r^2}}.$$ This means that $f_{n}\rightarrow f$ uniformly on the closed disc $\left\{|Z|_k \leq'r\right\}$. Since $r$ is arbitrary, hence $f_{n}\rightarrow f$ uniformly on every compact subset of $\mathbb U_{\mathbb B \mathbb C}$.
\end{proof}
\begin{example}\label{auto} Let $\mathbb U_{\mathbb {BC}}$ be the unit disc in $\mathbb {BC}$. Define $\Psi:\mathbb U_{\mathbb {BC}}\rightarrow \mathbb U_{\mathbb {BC}}$ as 
$$\Psi(Z)=\lambda \frac{Z+W}{1+W^{\dagger_3}Z},$$ where $|\lambda|_k=1$ and $W\in\mathbb U_{\mathbb {BC}} $. Then $\Psi$ is a bicomplex automorphism of  $\mathbb U_{\mathbb {BC}}$.
Writting $Z=e_1z_1+e_2z_2$, $W=e_1w_1+e_2w_2$ and $\lambda=e_1\lambda_1+e_2\lambda_2$, we have
  $$\lambda \frac{Z+W}{1+W^{\dagger_3}Z}=e_1\lambda_1 \frac{z_1+w_1}{1+\overline{w}_1z_1}+e_2\lambda_2 \frac{z_2+w_2}{1+\overline{w}_2z_2}.$$
Thus we can write $\Psi(Z)=e_1\Psi_1(z_1)+e_2\Psi_2(z_2),$ where $\Psi_l$  for $l=1,2$ is an automorphism of the unit disc $\mathbb U$.
\end{example}
\begin{theorem}\label{cowen} Let $H^2(\mathbb U_{\mathbb {BC}})$ be the bicomplex Hardy space and $\Psi:\mathbb U_{\mathbb {BC}}\rightarrow \mathbb U_{\mathbb {BC}}$ be an automorphism defined as in Example \ref{auto}. Then 
\begin{align*} \left(\frac{1-|\Psi(0)|_k}{1+|\Psi(0)|_k}\right)^{\frac{1}{2}}\|f\|_{\mathbb D,H^2(\mathbb U_{\mathbb B \mathbb C})}&\leq^{'} \|C_\Psi(f)\|_{\mathbb D,H^2(\mathbb U_{\mathbb B \mathbb C})}\\
& \leq^{'}\left(\frac{1+|\Psi(0)|_k}{1-|\Psi(0)|_k}\right)^{\frac{1}{2}}\|f\|_{\mathbb D,H^2(\mathbb U_{\mathbb B \mathbb C})}.  
\end{align*}
\end{theorem}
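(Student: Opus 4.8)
The plan is to reduce the statement to the classical norm estimate for composition operators on $H^2(\mathbb{U})$ induced by disc automorphisms, and then reassemble it through the idempotent decomposition. Recall (see, e.g., \cite{SS}, \cite{EE}) that if $\varphi$ is an automorphism of the unit disc $\mathbb{U}$, then for every $g\in H^2(\mathbb{U})$,
$$
\left(\frac{1-|\varphi(0)|}{1+|\varphi(0)|}\right)^{\frac{1}{2}}\|g\|_{H^2(\mathbb{U})}\leq \|C_\varphi g\|_{H^2(\mathbb{U})}\leq \left(\frac{1+|\varphi(0)|}{1-|\varphi(0)|}\right)^{\frac{1}{2}}\|g\|_{H^2(\mathbb{U})}.
$$
The classical proof passes to boundary values, uses the change-of-variable formula for the Hardy norm, and bounds the Poisson kernel $\frac{1-|\varphi(0)|^2}{|1-\overline{\varphi(0)}e^{i\theta}|^2}$ between $\frac{1-|\varphi(0)|}{1+|\varphi(0)|}$ and $\frac{1+|\varphi(0)|}{1-|\varphi(0)|}$; I would simply quote it.

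First I would invoke Theorem 1.4 together with Example \ref{auto} to write $\Psi(Z)=e_1\Psi_1(z_1)+e_2\Psi_2(z_2)$, where each $\Psi_l$ is an automorphism of $\mathbb{U}$, and Remark \ref{cmop} to write $C_\Psi=e_1C_{\Psi_1}+e_2C_{\Psi_2}$. For $f\in H^2(\mathbb{U}_{\mathbb{B}\mathbb{C}})$ with $f=e_1f_1+e_2f_2$ and $f_l\in H^2(\mathbb{U})$, this gives $C_\Psi f=e_1(C_{\Psi_1}f_1)+e_2(C_{\Psi_2}f_2)$. Using the idempotent description of the $\mathbb{D}$-valued $H^2$-norm (exactly as in the proof of Proposition \ref{GE}), I would record
$$
\|C_\Psi f\|_{\mathbb{D},H^2(\mathbb{U}_{\mathbb{B}\mathbb{C}})}=e_1\|C_{\Psi_1}f_1\|_{H^2(\mathbb{U})}+e_2\|C_{\Psi_2}f_2\|_{H^2(\mathbb{U})},\qquad \|f\|_{\mathbb{D},H^2(\mathbb{U}_{\mathbb{B}\mathbb{C}})}=e_1\|f_1\|_{H^2(\mathbb{U})}+e_2\|f_2\|_{H^2(\mathbb{U})}.
$$

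Next, applying the classical estimate to $\Psi_1$ acting on $f_1$ and to $\Psi_2$ acting on $f_2$ gives two scalar chains of inequalities, one for each $l\in\{1,2\}$. Multiplying the $l$-th chain by the non-negative idempotent $e_l\in\mathbb{D}^+$ and adding the two results --- legitimate because $a\leq b$ in $\mathbb{R}$ implies $e_la\leq' e_lb$ and $\leq'$ is compatible with sums of elements of $\mathbb{D}^+$ --- then yields the claimed bicomplex estimate, provided one has the componentwise identities
$$
|\Psi(0)|_k=e_1|\Psi_1(0)|+e_2|\Psi_2(0)|,\qquad 1\mp|\Psi(0)|_k=e_1(1\mp|\Psi_1(0)|)+e_2(1\mp|\Psi_2(0)|),
$$
together with the fact that reciprocation and the hyperbolic square root act idempotent-component-wise on $\mathbb{D}^+$, so that $\left(\frac{1-|\Psi(0)|_k}{1+|\Psi(0)|_k}\right)^{\frac{1}{2}}=e_1\left(\frac{1-|\Psi_1(0)|}{1+|\Psi_1(0)|}\right)^{\frac{1}{2}}+e_2\left(\frac{1-|\Psi_2(0)|}{1+|\Psi_2(0)|}\right)^{\frac{1}{2}}$ and similarly for the upper bound.

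The only genuine point needing care --- and the step I would be most careful with --- is the hyperbolic arithmetic: one must check that $1\pm|\Psi(0)|_k$ is invertible in $\mathbb{D}$ (each of its two idempotent coordinates is a strictly positive real, since $\Psi_l(0)\in\mathbb{U}$, so the element is not a zero divisor and the quotient makes sense), and that multiplying a valid $\leq'$ inequality by the strictly positive hyperbolic scalars $\left(\frac{1\mp|\Psi(0)|_k}{1\pm|\Psi(0)|_k}\right)^{\frac{1}{2}}$ preserves $\leq'$. Both facts reduce, coordinate by coordinate, to elementary statements about the reals, so once the classical $H^2(\mathbb{U})$ theorem is available there is no further obstacle.
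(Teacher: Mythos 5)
Your proposal is correct and follows essentially the same route as the paper: decompose $\Psi$ and $C_\Psi$ into idempotent components, apply the classical Cowen--MacCluer estimate to each $C_{\Psi_l}$ on $H^2(\mathbb{U})$, and reassemble via the $\mathbb{D}$-valued norm. The only difference is that you spell out the hyperbolic arithmetic (componentwise decomposition of the constants, invertibility of $1\pm|\Psi(0)|_k$, compatibility of $\leq'$ with multiplication by elements of $\mathbb{D}^+$) that the paper dismisses with ``follows easily,'' which is a welcome addition rather than a deviation.
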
 
\begin{proof}  Since we can write $\Psi(Z)=e_1\Psi_1(z_1)+e_2\Psi_2(z_2),$ where $\Psi_l$, $l=1,2,$ is an automorphism of the unit disc $\mathbb U$. Also by using Remark \ref{cmop}, composition operator $C_\Psi$ induced from $\psi$ on $H^2(\mathbb U_{\mathbb {BC}})$ can be written as $C_\Psi=e_1C_{\Psi_1}+e_2C_{\Psi_2}$ where $C_{\Psi_l}$, $l=1,2,$ is a composition operator on $H^2(\mathbb U)$.\\     
 Thus by using \cite [Theorem  3.6]{EE}, we have, for $l=1,2,$
$$\left(\frac{1-|\Psi_l(0)|}{1+|\Psi_l(0)|}\right)^{\frac{1}{2}}\|f_l\|_{H^2(\mathbb U)}\leq \|C_{\Psi_l}(f_l)\|_{H^2(\mathbb U)} \leq \left(\frac{1+|\Psi_l(0)|}{1-|\Psi_l(0)|}\right)^{\frac{1}{2}}\|f_l\|_{H^2(\mathbb U)}.$$
Hence the inequalities in the proof of the theorem follows easily by using the $\mathbb D$-valued norm on $H^2(\mathbb U_{\mathbb {BC}})$. 

\end{proof}
\begin{corollary} Let $H^2(\mathbb U_{\mathbb {BC}})$ be the bicomplex Hardy space and $\Phi:\mathbb U_{\mathbb {BC}}\rightarrow \mathbb U_{\mathbb {BC}}$ be a bicomplex holomorphic self map. Then 
$$\left(\frac{1}{1-|\Phi(0)|_k^2}\right)^{\frac{1}{2}} \leq^{'} \|C_\Phi\|_{\mathbb D,H^2(\mathbb U_{\mathbb B \mathbb C})} \leq^{'}\left(\frac{1+|\Phi(0)|_k}{1-|\Phi(0)|_k}\right)^{\frac{1}{2}}.$$  
\end{corollary}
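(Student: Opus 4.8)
The statement to prove is the Corollary bounding $\|C_\Phi\|_{\mathbb D,H^2(\mathbb U_{\mathbb B \mathbb C})}$ for an arbitrary bicomplex holomorphic self-map $\Phi$. The plan is to reduce everything to the idempotent components and invoke the corresponding classical result for composition operators on $H^2(\mathbb U)$. First I would write $\Phi(Z)=e_1\Phi_1(z_1)+e_2\Phi_2(z_2)$ as in Remark \ref{cmop}, so that $C_\Phi=e_1C_{\Phi_1}+e_2C_{\Phi_2}$ with each $C_{\Phi_l}$ a bounded composition operator on $H^2(\mathbb U)$. Then $\|C_\Phi\|_{\mathbb D,H^2(\mathbb U_{\mathbb B \mathbb C})}=e_1\|C_{\Phi_1}\|_{H^2(\mathbb U)}+e_2\|C_{\Phi_2}\|_{H^2(\mathbb U)}$, since the $\mathbb D$-valued operator norm decomposes componentwise (this follows from the definition of the $\mathbb D$-valued operator norm in Example (ii) together with the idempotent decomposition of $H^2(\mathbb U_{\mathbb B \mathbb C})$ and of its $\mathbb D$-valued norm).

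Next I would recall the classical norm estimate for composition operators on the Hardy space of the disc (the Littlewood-type bound, e.g. from \cite[Corollary 3.7]{EE} or the discussion around \cite[Theorem 3.6]{EE}): for any holomorphic self-map $\varphi$ of $\mathbb U$,
\begin{align*}
\left(\frac{1}{1-|\varphi(0)|^2}\right)^{\frac{1}{2}} \leq \|C_\varphi\|_{H^2(\mathbb U)} \leq \left(\frac{1+|\varphi(0)|}{1-|\varphi(0)|}\right)^{\frac{1}{2}}.
\end{align*}
Applying this to $\varphi=\Phi_1$ and $\varphi=\Phi_2$ separately gives two pairs of real inequalities. I would then multiply the $\Phi_1$-inequalities by $e_1$, the $\Phi_2$-inequalities by $e_2$, add, and use that $e_1,e_2\in\mathbb D^+$ together with the fact that $\alpha\le'\beta$ and $\gamma\le'\delta$ imply $e_1\alpha+e_2\gamma\le' e_1\beta+e_2\delta$. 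Since $|\Phi(0)|_k=e_1|\Phi_1(0)|+e_2|\Phi_2(0)|$, one has $1-|\Phi(0)|_k^2=e_1(1-|\Phi_1(0)|^2)+e_2(1-|\Phi_2(0)|^2)$ and similarly for the upper-bound expression, and taking square roots in $\mathbb D^+$ is compatible with the idempotent decomposition. Reassembling yields exactly the claimed two-sided bound.

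The only genuinely delicate point is the first equality $\|C_\Phi\|_{\mathbb D}=e_1\|C_{\Phi_1}\|+e_2\|C_{\Phi_2}\|$: one must check that taking the supremum over $\{x\in H^2(\mathbb U_{\mathbb B \mathbb C}):\|x\|_{\mathbb D}\le' 1\}$ of $\|C_\Phi x\|_{\mathbb D}$ really splits as a pair of independent suprema over the unit balls of $H^2(\mathbb U)$ in each component. This is because $\|x\|_{\mathbb D}\le' 1$ is equivalent to $\|x_1\|_1\le 1$ \emph{and} $\|x_2\|_2\le 1$ (the partial order $\le'$ on $\mathbb D$ is componentwise), and $C_\Phi x=e_1 C_{\Phi_1}x_1+e_2 C_{\Phi_2}x_2$ with the two components varying independently; one then uses $\sup$ over a product set factoring into the product of $\sup$'s in each slot. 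I expect this bookkeeping to be routine given the machinery already set up in the excerpt, so the corollary follows directly.
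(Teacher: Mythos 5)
Your proposal is correct and follows exactly the route the paper intends: the corollary is stated without an explicit proof, but the pattern established in the proof of the preceding theorem (decompose $\Phi$ and $C_\Phi$ into idempotent components, apply the classical Cowen--MacCluer norm estimate to each $C_{\Phi_l}$ on $H^2(\mathbb U)$, and reassemble via the componentwise $\mathbb D$-valued norm) is precisely what you describe. Your extra care about the factorization of the supremum defining $\|C_\Phi\|_{\mathbb D}$ is a welcome addition rather than a deviation.
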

   
\end{section}   

 \begin{section} {Bicomplex Weighted Hardy Space and Reproducing Kernels} 
 In this section we study the bicomplex version of the weighted Hardy spaces and discuss some of their properties. We also describe reproducing kernels for the bicomplex weighted Hardy spaces. The result of this section are the extensions of some results from \cite [Section 2.1]{EE}. 
 \begin{definition}  Consider a sequence $\left\{\beta(n)\right\}_{n=0}^\infty$ of positive hyperbolic numbers such that $\beta(n)=\|Z^n\|_\mathbb D$ with $Z\in\mathbb U_{\mathbb B\mathbb C}$. A bicomplex weighted Hardy space $H^2_\beta(\mathbb{U}_{\mathbb{B}\mathbb{C}})$ is a bicomplex Hilbert space consisting of bicomplex-valued holomorphic functions on the unit disc $\mathbb U_{\mathbb B\mathbb C}$ such that the set of monomials $\left\{1,\;Z,\; Z^2,.\;.\;.\right\}$ form an orthogonal basis in $H^2_\beta(\mathbb{U}_{\mathbb{B}\mathbb{C}})$. Thus, $f(Z)=\sum_{n=0}^\infty{a_nZ^n}$, with $a_n\in \mathbb {BC}$, represents a holomorphic function in $H^2_\beta(\mathbb{U}_{\mathbb{B}\mathbb{C}})$ if and only if the series of hyperbolic numbers $\sum_{n=0}^\infty{|a_n|^2_k\beta(n)^2}$ is convergent.\\ Further, for each $Z=e_1z_1+e_2z_2$ in $U_{\mathbb B\mathbb C}$, the orthogonality of $\left\{Z^n\right\}$ implies the set of monomials $\left\{z^n_l\right\}$ for $l=1,2$ is also orthogonal (see, \cite [P. 84] {GG}). Setting $a_n=e_1a_{n1}+e_2a_{n2}$ and $\beta(n)=\|Z^n\|_\mathbb D=e_1\|z_1^n\|_1+e_2\|z^n_2\|_2=e_1\beta_1(n)+e_2\beta_2(n)$, we get:  
 $$|a_n|^2_k\beta(n)^2=e_1|a_{n1}|^2\beta_1(n)^2+e_2|a_{n2}|^2\beta_2(n)^2.$$
This means $\sum_{n=0}^\infty{|a_n|^2_k\beta(n)^2}\; \text{is convergent}$ if and only if both the complex series $$\sum_{n=0}^\infty{|a_{n1}|^2\beta_1(n)^2},\;\; \sum_{n=0}^\infty{|a_{n2}|^2\beta_2(n)^2}$$ are convergent. Thus, both the functions 
$$f_1(z_1)=\sum_{n=0}^\infty{ a_{n1}z^n_1}\;\text{and} \;f_2(z_2)=\sum_{n=0}^\infty{a_{n2}z^n_2}$$ belong to the weighted Hardy space of the unit disk $H^2_{\beta_1}(\mathbb{U})$ and $H^2_{\beta_2}(\mathbb{U})$ respectively.
Hence the bicomplex weighted Hardy space can be written as
$$H^2_\beta(\mathbb{U}_{\mathbb{B}\mathbb{C}})=e_1H^2_{\beta_1}(\mathbb{U})+e_2H^2_{\beta_2}(\mathbb{U})\;.$$
The $\mathbb{B}\mathbb{C}$-valued inner product on $H^2_\beta(\mathbb{U}_{\mathbb{B}\mathbb{C}})$ is given by 
\begin{align*} 
<\sum_{n=0}^{\infty}a_nZ^n,\sum_{n=0}^{\infty}b_nZ^n>_{H^2_\beta(\mathbb{U}_{\mathbb{B}\mathbb{C}})}&= \sum_{n=0}^\infty{a_n\beta(n)\;.\;(b_n\beta(n))}^{\dagger_3}\\  
&=\sum_{n=0}^\infty a_n(b_n)^{\dagger_3}\beta(n)^2. 
\end{align*} 
Further, with the orthogonality of the monomials, the $\mathbb{B}\mathbb{C}$-valued inner product on $H^2_\beta(\mathbb{U}_{\mathbb{B}\mathbb{C}})$ induces the $\mathbb D$-valued norm:   
\begin{align*}
\|\sum_{n=0}^{\infty}a_nZ^n\|^2_{\mathbb{D},{H^2_\beta(\mathbb{U}_{\mathbb{B}\mathbb{C}})}}&= <\sum_{n=0}^{\infty}a_nZ^n,\sum_{n=0}^{\infty}a_nZ^n>_{H^2_\beta(\mathbb{U}_{\mathbb{B}\mathbb{C}})}=\sum_{n=0}^\infty{|a_n|^2_k\beta(n)^2_k}\\
&=e_1\sum_{n=0}^\infty{|a_{n1}|^2\beta_1(n)^2}+e_2\sum_{n=0}^\infty{|a_{n2}|^2\beta_2(n)^2}\\ 
&=e_1\|\sum_{n=0}^\infty{ a_{n1}z^n_1} \|^2_{H^2_{\beta_1}(\mathbb{U})}+e_2\|\sum_{n=0}^\infty{a_{n2}z^n_2}\|^2_{H^2_{\beta_2}(\mathbb{U})}\;. 
 \end{align*}
\end{definition}
  \begin{remark}\label{krnl} $(i)$ The bicomplex weighted Hardy space $H^2_\beta(\mathbb{U}_{\mathbb{B}\mathbb{C}})$ is a bicomplex Hardy space $H^2(\mathbb{U_{\mathbb{BC}}})$ with weight $\beta(n)\equiv 1$.\\  
 $(ii)$ The bicomplex weighted Hardy space $H^{2}_{\beta}(\mathbb{U_{\mathbb{BC}}})$ is a bicomplex Bergman space $A^{2}(\mathbb{U_{\mathbb{BC}}})$ with weight  $\beta(n)=(n+1)^{-1/2}$. Thus, the  bicomplex Bergman space $A^2(\mathbb{U}_{\mathbb{B}\mathbb{C}})$ is defined to be the set of functions $f:\mathbb{U}_{\mathbb{B}\mathbb{C}}\rightarrow \mathbb{B}\mathbb{C}$ such that for any $Z\in \mathbb{U}_{\mathbb{B}\mathbb{C}}$ and  $a_n \in \mathbb B \mathbb C$,
 $$f(Z)=\sum_{n=0}^\infty{a_nZ^n} $$ and the series of hyperbolic numbers $\sum_{n=0}^\infty\frac{|a_n|^2_k}{n+1}\; \text{is convergent}$. This means that the bicomplex Bergman space can be written as $$A^2(\mathbb{U}_{\mathbb{B}\mathbb{C}})=e_1A^2(\mathbb{U})+e_2A^2(\mathbb{U})\;,$$ where $A^2(\mathbb{U})$ is the Bergman space of the unit disk $\mathbb U$.\\  
$(iii)$ The bicomplex weighted Hardy space $H^{2}_{\beta}(\mathbb{U_{\mathbb{BC}}})$ is a bicomplex Dirichlet space $D(\mathbb{U_{\mathbb{BC}}})$ with weight   $\beta(n)=(n+1)^{1/2}$. Thus, the  bicomplex Dirichlet space $D(\mathbb{U}_{\mathbb{B}\mathbb{C}})$ is defined to be the set of functions $f:\mathbb{U}_{\mathbb{B}\mathbb{C}}\rightarrow \mathbb{B}\mathbb{C}$ such that for any $Z\in \mathbb{U}_{\mathbb{B}\mathbb{C}}$ and $a_n \in \mathbb B \mathbb C$
 $$f(Z)=\sum_{n=0}^\infty{a_nZ^n} $$ and the series of hyperbolic numbers $\sum_{n=0}^\infty|a_n|^2_k(n+1)\; \text{is convergent}$. Hence we can write the bicomplex Dirichlet space as $$D(\mathbb{U}_{\mathbb{B}\mathbb{C}})=e_1D(\mathbb{U})+e_2D(\mathbb{U})\;,$$ where $D(\mathbb{U})$ is the Dirichlet space of the unit disk $\mathbb U$.  
 \end{remark}  
   \begin{proposition}
Let $T$ be the operator of multiplication by $Z$ on $H^{2}_{\beta}(\mathbb{U_{\mathbb{BC}}})$. Then\\
$(i)$ $T$ is $\mathbb{D}$-bounded if and only if $\sup{\beta(n+1)/\beta(n}) \in \mathbb{D}$.\\ 
$(ii)$ $T$ is $\mathbb{D}$-bounded below if and only if $\inf{\beta(n+1)/\beta(n}) \in \mathbb{D}^{+}$.  
\end{proposition}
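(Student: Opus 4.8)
The plan is to reduce both statements to the classical description of the multiplication operator on a complex weighted Hardy space, via the idempotent decomposition $H^2_\beta(\mathbb{U}_{\mathbb{B}\mathbb{C}}) = e_1 H^2_{\beta_1}(\mathbb{U}) + e_2 H^2_{\beta_2}(\mathbb{U})$ set up in Definition 4.1. First I would record that the operator $T$ of multiplication by $Z$ decomposes as $T = e_1 T_1 + e_2 T_2$, where $T_l$ is multiplication by $z_l$ on $H^2_{\beta_l}(\mathbb{U})$ for $l = 1,2$: indeed $T Z^n = Z^{n+1}$, and reading this identity in idempotent coordinates gives $T_l z_l^n = z_l^{n+1}$. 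Next, from the definition of the $\mathbb{D}$-valued operator norm (Section 2, Example (ii)) together with the idempotent decomposition of bicomplex linear operators (cf. \cite{KS}), one has $\|T\|_{\mathbb{D}} = e_1 \|T_1\| + e_2 \|T_2\|$, so that $T$ is $\mathbb{D}$-bounded if and only if $T_1$ and $T_2$ are both bounded, and likewise $T$ is $\mathbb{D}$-bounded below if and only if $T_1$ and $T_2$ are both bounded below.

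The second step is to invoke the classical theory of the multiplication operator on a complex weighted Hardy space, \cite[Section 2.1]{EE}: $T_l$ is bounded on $H^2_{\beta_l}(\mathbb{U})$ precisely when $\sup_n \beta_l(n+1)/\beta_l(n) < \infty$ (in which case $\|T_l\|$ equals this supremum), and $T_l$ is bounded below precisely when $\inf_n \beta_l(n+1)/\beta_l(n) > 0$. It then remains to translate these two componentwise conditions into a single statement about the hyperbolic numbers $\beta(n+1)/\beta(n)$. Since $\beta(n) = e_1\beta_1(n) + e_2\beta_2(n)$ with $\beta_l(n) > 0$, each $\beta(n)$ is invertible in $\mathbb{D}$ and $\beta(n+1)/\beta(n) = e_1\,\beta_1(n+1)/\beta_1(n) + e_2\,\beta_2(n+1)/\beta_2(n)$. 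I would then prove the elementary lemma that a supremum, respectively infimum, in the partial order $\leq'$ of a family of hyperbolic numbers is computed coordinatewise, and exists in $\mathbb{D}$ exactly when both associated real families are bounded above, respectively below; this shows that $\sup_n \beta(n+1)/\beta(n)$ exists in $\mathbb{D}$ iff both $\sup_n \beta_l(n+1)/\beta_l(n)$ are finite, and that $\inf_n \beta(n+1)/\beta(n)$, which always lies in $\mathbb{D}^+$ coordinatewise, is an \emph{invertible} element of $\mathbb{D}^+$ iff both $\inf_n \beta_l(n+1)/\beta_l(n)$ are strictly positive. Feeding this back into the first paragraph yields (i) and (ii).

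The step I expect to be the main obstacle is the order-theoretic bookkeeping in the last part: one must pin down precisely what "$T$ is $\mathbb{D}$-bounded below" means — namely, bounded below by an invertible positive hyperbolic multiple of $\|\cdot\|_{\mathbb{D}}$, which is exactly what forces both coordinate infima to be strictly positive rather than merely nonnegative — and one must note that suprema and infima with respect to $\leq'$ need not exist in general, so the equivalences in (i) and (ii) have to be read as "the indicated extremum exists and lies in $\mathbb{D}$ (resp. $\mathbb{D}^+$)". Once the coordinatewise-extremum lemma and this interpretive point are in place, the remainder is a routine transcription of the complex case through the $e_1, e_2$ coordinates.
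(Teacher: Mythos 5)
Your proposal follows essentially the same route as the paper: decompose $T = e_1T_1 + e_2T_2$ via the idempotent representation, apply the classical characterization of the multiplication operator on $H^2_{\beta_l}(\mathbb{U})$ from \cite[Proposition 2.7]{EE} to each component, and translate the two componentwise conditions into a single statement about the hyperbolic ratios $\beta(n+1)/\beta(n)$. Your additional care about the coordinatewise computation of suprema and infima in the partial order $\leq'$, and about what ``$\mathbb{D}$-bounded below'' must mean (an invertible positive hyperbolic lower bound), makes explicit two points the paper's proof passes over silently, but the argument is the same.
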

\begin{proof} For each $Z=e_1z_1+e_2z_2 \in \mathbb{U_{\mathbb{BC}}}$, let $T = e_1 T_1 + e_2 T_2$. Then $T_l$ is the operator of multiplication by $z_l$ on $H^{2}_{\beta_l}(\mathbb{U}),\; l=1,2.$\\
 $(i)$ We know that $T$ is $\mathbb{D}$-bounded if and only if $T_l$ is bounded, $l=1,2.$ Using \cite [Proposition 2.7]{EE} we have that $T_l$ is bounded if and only if $\sup{\beta_l(n+1)/\beta_l(n)<\infty}$ for $l=1,2$. It follows that
\begin{align*}
T~\textmd{is}~ \mathbb{D}-\textmd{bounded}
&\Leftrightarrow e_1 \sup{\beta_1(n+1)/\beta_1(n)}+e_2 \sup{\beta_2(n+1)/\beta_2(n)} \in \mathbb{D} \\
&\Leftrightarrow \sup\left(e_1 \beta_1(n+1)/\beta_1(n)+ e_2 \beta_2(n+1)/\beta_2(n)\right) \in \mathbb{D}\\
&\Leftrightarrow \sup{\beta(n+1)/\beta(n}) \in \mathbb{D}.
\end{align*} 
 
 $(ii)$ Clearly $T$ is $\mathbb{D}$-bounded below if and only if $T_l$ is bounded below, $l=1,2.$ By using \cite [Proposition 2.7]{EE} we see  that $T_l$ is bounded below if and only if $\inf{\beta_l(n+1)/\beta_l(n)}$ is positive, $l=1,2$. Therefore, we obtain
\begin{align*}
T\; \text{is}~ \mathbb{D}-\textmd{bounded below}
&\Leftrightarrow e_1 \inf{\beta_1(n+1)/\beta_1(n)}+e_2 \inf{\beta_2(n+1)/\beta_2(n)} \in \mathbb{D}^{+} \\
&\Leftrightarrow \inf\left(e_1 \beta_1(n+1)/\beta_1(n)+ e_2 \beta_2(n+1)/\beta_2(n)\right) \in \mathbb{D}^{+}\\
&\Leftrightarrow \inf{\beta(n+1)/\beta(n)} \in \mathbb{D}^{+}. 
\end{align*}
 \end{proof}
 \begin{definition} \label{DGnFn} Let $H^{2}_{\beta}(\mathbb{U_{\mathbb{BC}}})$ be a bicomplex weighted Hardy space. Then for each $Z\in \mathbb{U_{\mathbb{BC}}}$, the generating function for $H^{2}_{\beta}(\mathbb{U_{\mathbb{BC}}})$ is given by 
$$T(Z)=\sum_{n=o}^{\infty}\frac{Z^n}{\beta(n)^2}.$$
Setting $Z=e_1z_1+e_2z_2$ and $\beta(n)=e_1\beta_1(n) +e_2\beta_2(n)$ one gets: $$\sum_{n=o}^{\infty}\frac{Z^n}{\beta(n)^2}= e_1\sum_{n=o}^{\infty}\frac{z_1^n}{\beta_1(n)^2}+e_2\sum_{n=o}^{\infty}\frac{z_2^n}{\beta_2(n)^2}.$$ Thus, we can write $T(Z)= e_1T_1(z_1)+e_2T_2(z_2),$ where $T_1$ and $T_2$ are the generating functions for the weighted Hardy spaces $H^{2}_{\beta_1}(\mathbb{U})$   and $H^{2}_{\beta_2}(\mathbb{U})$ respectively. 
\end{definition}
 We now show that the generating function for the bicomplex weighted Hardy space $H^{2}_{\beta}(\mathbb{U_{\mathbb{BC}}})$ is holomorphic on the unit disk $\mathbb{U_{\mathbb{BC}}}$. 
\begin{lemma}\label{GnFn} Let $T$ be the generating function for $H^{2}_{\beta}(\mathbb{U_{\mathbb{BC}}})$. Then $T$ is holomorphic on the unit disk $\mathbb U_{\mathbb B\mathbb C}$.
\end{lemma}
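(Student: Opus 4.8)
The plan is to reduce the statement to the corresponding classical fact about scalar weighted Hardy spaces by using the idempotent decomposition already set up in Definition \ref{DGnFn}. First I would write $T(Z)=e_1T_1(z_1)+e_2T_2(z_2)$, where $T_l(z_l)=\sum_{n=0}^{\infty}z_l^{n}/\beta_l(n)^{2}$ is the generating function of the complex weighted Hardy space $H^{2}_{\beta_l}(\mathbb{U})$ for $l=1,2$. Since $\mathbb{U}_{\mathbb{B}\mathbb{C}}$ is precisely the cartesian domain determined by $\Omega_1=\Omega_2=\mathbb{U}$, the characterisation of bicomplex holomorphic functions on cartesian domains (Theorem 1.4) will yield that $T$ is bicomplex holomorphic on $\mathbb{U}_{\mathbb{B}\mathbb{C}}$ as soon as each $T_l$ is known to be holomorphic on $\mathbb{U}$. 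So the whole content lies in the scalar case.

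For the scalar case I would show that the power series defining $T_l$ has radius of convergence at least $1$, i.e. that $\limsup_{n}\bigl(\beta_l(n)^{-2}\bigr)^{1/n}\le 1$, equivalently $\liminf_{n}\beta_l(n)^{1/n}\ge 1$. The idea is to test against a concrete member of the space: the function $g(z)=\sum_{n\ge 1}z^{n}/(n\,\beta_l(n))$ lies in $H^{2}_{\beta_l}(\mathbb{U})$ because $\sum_{n\ge 1}\lvert 1/(n\beta_l(n))\rvert^{2}\beta_l(n)^{2}=\sum_{n\ge 1}1/n^{2}<\infty$; since every element of $H^{2}_{\beta_l}(\mathbb{U})$ is holomorphic on $\mathbb{U}$, the series for $g$ converges throughout $\mathbb{U}$, which forces $\limsup_{n}(n\beta_l(n))^{-1/n}\le 1$, and then $n^{1/n}\to 1$ gives $\liminf_{n}\beta_l(n)^{1/n}\ge 1$. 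Hence $\limsup_{n}\bigl(\beta_l(n)^{-2}\bigr)^{1/n}\le 1$ and $T_l$ is holomorphic on $\mathbb{U}$. Alternatively, one may simply quote this from \cite[Section 2.1]{EE}, where the generating function of a weighted Hardy space on $\mathbb{U}$ is shown to be analytic on $\mathbb{U}$.

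Putting the pieces back together, $T(Z)=e_1T_1(z_1)+e_2T_2(z_2)$ with both $T_1$ and $T_2$ holomorphic on $\mathbb{U}$, so Theorem 1.4 delivers the conclusion that $T$ is bicomplex holomorphic on $\mathbb{U}_{\mathbb{B}\mathbb{C}}$. I expect the only genuinely non-routine step to be the radius-of-convergence estimate $\liminf_{n}\beta_l(n)^{1/n}\ge 1$ in the scalar case (or, equivalently, the appeal to the classical result); the passage to and from the idempotent components is purely mechanical given Definition \ref{DGnFn} and Theorem 1.4.
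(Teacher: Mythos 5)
Your proposal is correct and follows essentially the same route as the paper: decompose $T(Z)=e_1T_1(z_1)+e_2T_2(z_2)$ via Definition \ref{DGnFn}, invoke the classical fact that the generating function of a weighted Hardy space is holomorphic on $\mathbb{U}$ (the paper simply cites \cite[Lemma 2.9]{EE}), and reassemble using the characterisation of bicomplex holomorphic functions on cartesian domains. The only difference is that you also supply a self-contained proof of the scalar radius-of-convergence estimate $\liminf_n\beta_l(n)^{1/n}\ge 1$ via the test function $\sum_{n\ge 1}z^n/(n\beta_l(n))$, which the paper leaves to the reference; that argument is sound.
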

\begin{proof} By using Definition \ref{DGnFn}, we can write $T(Z)= e_1T_1(z_1)+e_2T_2(z_2),$ where $T_1$ and $T_2$ are the generating functions for $H^{2}_{\beta_1}(\mathbb{U})$ and  $H^{2}_{\beta_2}(\mathbb{U})$ respectively. Since generating functions for complex weighted Hardy spaces are holomorphic on the open unit disk $\mathbb U$ (see \cite [Lemma 2.9]{EE}). Thus $T_1$ and $T_2$ are holomorphic on $\mathbb U$ and for each $z_1, z_2\in \mathbb U$, the respective Taylor series $\sum_{n=o}^{\infty}\frac{z_1^n}{\beta_1(n)^2}$ and $\sum_{n=o}^{\infty}\frac{z_2^n}{\beta_2(n)^2}$ of $T_1$ and $T_2$ are convergent. Hence for $Z=e_1z_1+e_2z_2 \in \mathbb{U_{\mathbb{BC}}}$, the Taylor series $\sum_{n=o}^{\infty}\frac{Z^n}{\beta(n)^2}=e_1\sum_{n=o}^{\infty}\frac{z_1^n}{\beta_1(n)^2}+e_2\sum_{n=o}^{\infty}\frac{z_2^n}{\beta_2(n)^2}$ is also convergent which implies $T$ is holomorphic on the unit disk $\mathbb U_{\mathbb B\mathbb C}$. 
\end{proof}
 A reproducing kernel Hilbert space is a Hilbert space associated with a kernel that reproduces every function in the space. Like classical weighted Hardy spaces, the bicomplex weighted Hardy space $H^{2}_{\beta}(\mathbb{U_{\mathbb{BC}}})$ also has an additional property of being a reproducing kernel Hilbert space. This means that for each $W\in \mathbb{U_{\mathbb{BC}}}$, the evaluation functional $E_W:H^{2}_{\beta}(\mathbb{U_{\mathbb{BC}}})\rightarrow \mathbb{BC}$ at $W$ defined by $E_W(f)=f(W)$ is bounded. Thus by using the Riesz representation theorem for bicomplex Hilbert spaces (see, \cite [P. 82] {GG}, \cite [P. 74] {YY}), there is a special function $K_W\in H^{2}_{\beta}(\mathbb{U_{\mathbb{BC}}})$ such that $f(W)=<f,K_W>_{H^2_\beta(\mathbb{U}_{\mathbb{B}\mathbb{C}})}$. In the next theorem we will see that the generating function $T$ generates the reproducing kernel for the  bicomplex weighted Hardy space $H^{2}_{\beta}(\mathbb{U_{\mathbb{BC}}})$.          
 
  \begin{theorem} \label{REPK} For each $W$ in $\mathbb{U_{\mathbb{BC}}}$, the evaluation of functions in $H^{2}_{\beta}(\mathbb{U_{\mathbb{BC}}})$ at $W$ is a $\mathbb D$-bounded bicomplex linear functional such that for every function $f$ in $H^{2}_{\beta}(\mathbb{U_{\mathbb{BC}}})$, $f(W)=<f,K_W>_{H^2_\beta(\mathbb{U}_{\mathbb{B}\mathbb{C}})}$ where $K_W(Z)=T(W^{\dagger_3}Z)$, $T$ is the generating function for $H^{2}_{\beta}(\mathbb{U_{\mathbb{BC}}})$.   
\end{theorem}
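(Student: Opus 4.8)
The plan is to reduce the statement to the classical reproducing kernel theorem for complex weighted Hardy spaces (see \cite[Section 2.1]{EE}) by means of the idempotent decomposition $H^{2}_{\beta}(\mathbb{U_{\mathbb{BC}}}) = e_1 H^{2}_{\beta_1}(\mathbb{U}) + e_2 H^{2}_{\beta_2}(\mathbb{U})$ together with the componentwise form of the generating function $T(Z) = e_1 T_1(z_1) + e_2 T_2(z_2)$ from Definition \ref{DGnFn}. Fix $W = e_1 w_1 + e_2 w_2 \in \mathbb{U_{\mathbb{BC}}}$ and write a generic $f \in H^{2}_{\beta}(\mathbb{U_{\mathbb{BC}}})$ as $f = e_1 f_1 + e_2 f_2$ with $f_l \in H^{2}_{\beta_l}(\mathbb{U})$ for $l = 1, 2$. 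The classical theory tells us that each $H^{2}_{\beta_l}(\mathbb{U})$ is a reproducing kernel Hilbert space whose kernel at $w_l$ is $K^{(l)}_{w_l}(z_l) = T_l(\overline{w_l}\, z_l) \in H^{2}_{\beta_l}(\mathbb{U})$, so that the evaluation at $w_l$ is bounded and $f_l(w_l) = \langle f_l, K^{(l)}_{w_l}\rangle_{H^{2}_{\beta_l}(\mathbb{U})}$.

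First I would record how the $\dagger_3$-conjugation behaves in idempotent coordinates: starting from $z_1 = z - iw$ and $z_2 = z + iw$, a direct computation shows $Z^{\dagger_3} = e_1 \overline{z_1} + e_2 \overline{z_2}$, hence $W^{\dagger_3} Z = e_1 \overline{w_1}\, z_1 + e_2 \overline{w_2}\, z_2$ and therefore
\begin{align*}
T(W^{\dagger_3} Z) = e_1 T_1(\overline{w_1}\, z_1) + e_2 T_2(\overline{w_2}\, z_2) = e_1 K^{(1)}_{w_1}(z_1) + e_2 K^{(2)}_{w_2}(z_2).
\end{align*}
Setting $K_W(Z) := T(W^{\dagger_3} Z)$, this identity exhibits $K_W$ in idempotent form as $K_W = e_1 K^{(1)}_{w_1} + e_2 K^{(2)}_{w_2}$, so that $K_W \in H^{2}_{\beta}(\mathbb{U_{\mathbb{BC}}})$ by the idempotent decomposition of the space.

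It then remains to verify the reproducing identity and the $\mathbb{D}$-boundedness of the evaluation functional $E_W$. Using the splitting of the $\mathbb{BC}$-valued inner product into its idempotent components,
\begin{align*}
\langle f, K_W \rangle_{H^{2}_{\beta}(\mathbb{U_{\mathbb{BC}}})} &= e_1 \langle f_1, K^{(1)}_{w_1} \rangle_{H^{2}_{\beta_1}(\mathbb{U})} + e_2 \langle f_2, K^{(2)}_{w_2} \rangle_{H^{2}_{\beta_2}(\mathbb{U})} \\
&= e_1 f_1(w_1) + e_2 f_2(w_2) = f(W),
\end{align*}
where the middle equality is the classical reproducing property applied componentwise. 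For $\mathbb{D}$-boundedness one may write $E_W = e_1 E^{(1)}_{w_1} + e_2 E^{(2)}_{w_2}$ with each summand bounded on $H^{2}_{\beta_l}(\mathbb{U})$, or argue directly from a Cauchy--Schwarz inequality for the $\mathbb{D}$-valued inner product, $|f(W)|_k = |\langle f, K_W\rangle|_k \leq' \|f\|_{\mathbb D}\, \|K_W\|_{\mathbb D}$.

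The only step requiring genuine care is the bookkeeping of the $\dagger_3$-conjugation through the idempotent representation — confirming that it acts componentwise as ordinary complex conjugation, so that $W^{\dagger_3} Z$ indeed has components $e_1 \overline{w_1} z_1 + e_2 \overline{w_2} z_2$; once that is settled the rest is a routine componentwise transfer of the known complex result. I would additionally double-check that the conjugate-linear slot convention of the bicomplex $\mathbb{BC}$-valued inner product matches the one used in \cite{EE}, so that the kernel formula $K^{(l)}_{w_l}(z_l) = T_l(\overline{w_l} z_l)$ carries over without a conjugation mismatch.
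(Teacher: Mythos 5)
Your proposal is correct, but it follows a genuinely different route from the paper's own proof. The paper argues directly with power series: it expands $K_W(Z)=\sum_{n\geq 0}\frac{(W^{\dagger_3})^n}{\beta(n)^2}Z^n$, computes $\|K_W\|^2_{\mathbb D, H^2_\beta(\mathbb{U}_{\mathbb{BC}})}=\sum_{n\geq 0}\frac{|W|_k^{2n}}{\beta(n)^2}=T(|W|^2_k)$, invokes Lemma \ref{GnFn} (holomorphy of $T$ on $\mathbb{U}_{\mathbb{BC}}$) to conclude that this hyperbolic series converges and hence $K_W\in H^{2}_{\beta}(\mathbb{U_{\mathbb{BC}}})$, and then verifies $\langle f,K_W\rangle=\sum_n a_nW^n=f(W)$ by direct substitution into the $\mathbb{BC}$-valued inner product formula of Definition 4.1. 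You instead push everything through the idempotent decomposition and quote the classical reproducing-kernel theorem for $H^2_{\beta_l}(\mathbb{U})$ on each component; the key bookkeeping step you identify, namely that $\dagger_3$ acts as ordinary conjugation on idempotent components so that $W^{\dagger_3}Z=e_1\overline{w_1}z_1+e_2\overline{w_2}z_2$ and $K_W=e_1K^{(1)}_{w_1}+e_2K^{(2)}_{w_2}$, is exactly right (the paper itself records this splitting only afterwards, in Remark \ref{cor}). Your route is shorter and makes membership of $K_W$ in the space and the $\mathbb{D}$-boundedness of $E_W$ immediate from the classical theory, at the cost of not producing the explicit identity $\|K_W\|^2_{\mathbb D}=T(|W|^2_k)$, which the paper's direct computation yields as a useful byproduct. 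Your concern about the conjugate-linear slot is resolved by inspection: the paper's inner product $\sum_n a_n(b_n)^{\dagger_3}\beta(n)^2$ is conjugate-linear in the second argument, matching the convention under which the classical kernel is $T_l(\overline{w_l}z_l)$, so no mismatch occurs.
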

 \begin{proof} For each $W \in \mathbb U_{\mathbb {BC}}$,\begin{align*} \|K_W\|^2_{\mathbb{D},{H^2_\beta(\mathbb{U}_{\mathbb{B}\mathbb{C}})}}&= \|\sum_{n=0}^{\infty}\frac{\left(W^{\dagger_3}\right)^n}{\beta(n)^2}Z^n\|^2_{\mathbb{D},{H^2_\beta(\mathbb{U}_{\mathbb{B}\mathbb{C}})}}\\
&=\sum_{n=0}^{\infty}\left|\frac{\left(W^{\dagger_3}\right)^n}{\beta(n)^2}\right|_k^2\beta(n)^2=\sum_{n=0}^{\infty}\frac{ |W|_k^{2n}}{\beta(n)^2}=T(|W|^2_k).
 \end{align*} Now by Lemma \ref{GnFn}, the generating function $T$ for $H^{2}_{\beta}(\mathbb{U_{\mathbb{BC}}})$ is holomorphic on $\mathbb{U_{\mathbb{BC}}}$. Thus the series $\sum_{n=0}^{\infty}\frac{ |W|_k^{2n}}{\beta(n)^2}$ is convergent which implies $K_W \in H^{2}_{\beta}(\mathbb{U_{\mathbb{BC}}})$. 
 Further, if $f(Z)=\sum_{n=0}^{\infty} a_nZ^n \in H^{2}_{\beta}(\mathbb{U_{\mathbb{BC}}})$, then     
 \begin{align*} <f,K_W>_{H^2_\beta(\mathbb{U}_{\mathbb{B}\mathbb{C}})}&=\sum_{n=0}^\infty{a_n\beta(n)\left(\frac{\left(W^{\dagger_3}\right)^n}{\beta(n)^2}\beta(n)\right)}^{\dagger_3}\\
&= \sum_{n=0}^\infty{\frac{a_n W^n}{\beta(n)^2}\beta(n)^2}\\
&= \sum_{n=0}^\infty{a_n W^n}=f(W).
 \end{align*} 
  Hence evaluation of function $f$ in $H^{2}_{\beta}(\mathbb{U_{\mathbb{BC}}})$ is a bounded linear functional such that $f(W)=<f,K_W>_{H^2_\beta(\mathbb{U}_{\mathbb{B}\mathbb{C}})}$.
\end{proof}
 \begin{remark}\label{cor} From Theorem \ref{REPK}, for a bicomplex weighted Hardy space $H^{2}_{\beta}(\mathbb{U_{\mathbb{BC}}})$, the evaluation functional $E_W$ at $W$ is a bounded linear functional. Hence $H^{2}_{\beta}(\mathbb{U_{\mathbb{BC}}})$ is a bicomplex reproducing kernel Hilbert space with reproducing kernel $K_W(Z)=T(W^{\dagger_3}Z)$, where $T$ is the generating function for $H^{2}_{\beta}(\mathbb{U_{\mathbb{BC}}})$. Further, by using Definition \ref{DGnFn}, for each $W=e_1w_1+e_2w_2\in \mathbb{U_{\mathbb{BC}}}$,\\ we can write $T(W^{\dagger_3}Z)=e_1T_1(\overline{w}_1z_1)+e_2T_2(\overline{w}_2z_2)$. Thus reproducing kernel for $H^{2}_{\beta}(\mathbb{U_{\mathbb{BC}}})$ can also be defined as 
 $$ K_W(Z)=e_1K_{w_1}(z_1)+e_2K_{w_2}(z_2),$$
 where $K_{w_1}(z_1)$ and $K_{w_2}(z_2)$ are the reproducing kernels of the classical weighted Hardy spaces $H^2_{\beta_1}(\mathbb{U})$ and $H^2_{\beta_2}(\mathbb{U})$ respectively.   
 \end{remark}
 \begin{remark} Since a bicomplex weighted Hardy space $H^{2}_{\beta}(\mathbb{U_{\mathbb{BC}}})$ is a bicomplex Hardy space $H^2(\mathbb U_{\mathbb B \mathbb C})$ with weight $\beta(n)\equiv 1$. Hence $H^2(\mathbb U_{\mathbb B \mathbb C})$ is a bicomplex reproducing kernel Hilbert space. For each $W\in \mathbb U_\mathbb {BC}$, the reproducing kernel for $H^2(\mathbb U_{\mathbb B \mathbb C})$ is given by
 $$ K_W(Z)=T(W^{\dagger_3}Z)=\sum_{n=o}^{\infty}(W^{\dagger_3})^nZ^n=\frac{1}{1-W^{\dagger_3}Z}.$$ Further, writting $W=e_1w_1+e_2w_2$ and $Z=e_1z_1+e_2z_2$, we have
 
 \begin{align*} K_W(Z)&=\frac{1}{1-W^{\dagger_3}Z}=e_1\frac{1}{1-\overline{w}_1z_1}+e_2\frac{1}{1-\overline{w}_2z_2}\\
 &= e_1K_{w_1}(z_1)+e_2K_{w_2}(z_2),
 \end{align*}
where $K_{w_1}(z_1)$ and $K_{w_2}(z_2)$ are the reproducing kernels for the classical Hardy space $H^2(\mathbb U)\;.$ 
 \end{remark}
 \begin{corollary} The bicomplex Bergman space $A^2(\mathbb U_{\mathbb B \mathbb C})$ is a bicomplex reproducing kernel Hilbert space.
  \end{corollary}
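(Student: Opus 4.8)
The plan is to recognize $A^2(\mathbb{U}_{\mathbb{B}\mathbb{C}})$ as a particular bicomplex weighted Hardy space and then invoke Theorem~\ref{REPK} and Remark~\ref{cor}. By Remark~\ref{krnl}(ii), the bicomplex Bergman space coincides with $H^{2}_{\beta}(\mathbb{U}_{\mathbb{B}\mathbb{C}})$ for the weight $\beta(n)=(n+1)^{-1/2}$, and it admits the idempotent decomposition $A^2(\mathbb{U}_{\mathbb{B}\mathbb{C}})=e_1A^2(\mathbb{U})+e_2A^2(\mathbb{U})$. First I would check that this weight fits the hypotheses of the weighted-Hardy-space framework: $\{(n+1)^{-1/2}\}_{n\ge 0}$ is a sequence of positive real, hence positive hyperbolic, numbers, with idempotent components $\beta_1(n)=\beta_2(n)=(n+1)^{-1/2}$, so the definition of a bicomplex weighted Hardy space applies and $A^2(\mathbb{U}_{\mathbb{B}\mathbb{C}})$ is a bicomplex Hilbert space with the stated $\mathbb{B}\mathbb{C}$-valued inner product; in particular the componentwise inner products are exactly the classical Bergman inner products on $A^2(\mathbb{U})$, which is precisely what Remark~\ref{krnl}(ii) records.

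Next I would write down the generating function for this space, namely $T(Z)=\sum_{n=0}^{\infty}\frac{Z^n}{\beta(n)^2}=\sum_{n=0}^{\infty}(n+1)Z^n=\frac{1}{(1-Z)^2}$, and invoke Lemma~\ref{GnFn} to conclude that $T$ is holomorphic on $\mathbb{U}_{\mathbb{B}\mathbb{C}}$; equivalently, the idempotent components $T_l(z_l)=(1-z_l)^{-2}$ are holomorphic on $\mathbb{U}$ with Taylor series convergent there. Then a direct application of Theorem~\ref{REPK} with this $\beta$ gives, for each $W\in\mathbb{U}_{\mathbb{B}\mathbb{C}}$, that the evaluation functional $E_W(f)=f(W)$ is $\mathbb{D}$-bounded and satisfies $f(W)=\langle f,K_W\rangle_{H^2_\beta(\mathbb{U}_{\mathbb{B}\mathbb{C}})}$ with $K_W(Z)=T(W^{\dagger_3}Z)=\frac{1}{(1-W^{\dagger_3}Z)^2}\in A^2(\mathbb{U}_{\mathbb{B}\mathbb{C}})$. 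By Remark~\ref{cor} this exhibits $A^2(\mathbb{U}_{\mathbb{B}\mathbb{C}})$ as a bicomplex reproducing kernel Hilbert space, and writing $W=e_1w_1+e_2w_2$, $Z=e_1z_1+e_2z_2$, the kernel decomposes as $K_W(Z)=e_1\frac{1}{(1-\overline{w}_1z_1)^2}+e_2\frac{1}{(1-\overline{w}_2z_2)^2}$, the idempotent combination of the classical Bergman kernels.

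There is essentially no genuine obstacle: the corollary is an immediate specialization of Theorem~\ref{REPK}. The only point requiring a line of care is the bookkeeping that the complex identification $A^2(\mathbb{U})=H^{2}_{\beta_l}(\mathbb{U})$ with $\beta_l(n)=(n+1)^{-1/2}$ is the same normalization already fixed in Remark~\ref{krnl}(ii), so that the $\mathbb{D}$-valued norm induced by $\beta$ on $A^2(\mathbb{U}_{\mathbb{B}\mathbb{C}})$ is the Bergman norm componentwise; once this is in place, the reproducing property and the formula for $K_W$ follow verbatim from the weighted case.
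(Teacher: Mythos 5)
Your proposal is correct and follows essentially the same route as the paper: identify $A^2(\mathbb{U}_{\mathbb{B}\mathbb{C}})$ with $H^2_\beta(\mathbb{U}_{\mathbb{B}\mathbb{C}})$ for $\beta(n)=(n+1)^{-1/2}$, apply Theorem~\ref{REPK} and Remark~\ref{cor}, and compute the kernel $K_W(Z)=T(W^{\dagger_3}Z)=\left(1-W^{\dagger_3}Z\right)^{-2}$ together with its idempotent decomposition into the classical Bergman kernels. The extra care you take in checking that the weight is a positive hyperbolic sequence and that the componentwise norms are the Bergman norms is implicit in the paper's shorter argument but does not change the approach.
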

  \begin{proof}  Since bicomplex Bergman space $A^2(\mathbb U_{\mathbb B \mathbb C})$ is a bicomplex weighted Hardy space $H^{2}_{\beta}(\mathbb{U_{\mathbb{BC}}})$ with weight $\beta(n)=(n+1)^{-1/2}$. Thus by using Remark \ref{cor}, $A^2(\mathbb U_{\mathbb B \mathbb C})$ is a bicomplex reproducing kernel Hilbert space. For each $W\in \mathbb U_\mathbb {BC}$, the reproducing kernel for $A^2(\mathbb U_{\mathbb B \mathbb C})$ is given by $$K_W(Z)=T(W^{\dagger_3}Z)=\sum_{n=o}^{\infty}(n+1)(W^{\dagger_3})^nZ^n=\frac{1}{(1-W^{\dagger_3}Z)^2}\;.$$
Writting $W=e_1w_1+e_2w_2$ and $Z=e_1z_1+e_2z_2$, the reproducing kernel for the bicomplex Bergman space $A^2(\mathbb U_{\mathbb B \mathbb C})$ can also be expressed as
   \begin{align*} K_W(Z)&=\frac{1}{(1-W^{\dagger_3}Z)^2}=e_1\frac{1}{(1-\overline{w}_1z_1)^2}+e_2\frac{1}{(1-\overline{w}_2z_2)^2}\\
 &= e_1K_{w_1}(z_1)+e_2K_{w_2}(z_2),
 \end{align*}
 where $K_{w_1}(z_1)$ and $K_{w_2}(z_2)$ are the reproducing kernels for the classical Bergman space $A^2(\mathbb U)\;.$ 
 \end{proof}
 \begin{remark} Since bicomplex Dirichlet space $D(\mathbb{U_{\mathbb{BC}}})$ is a bicomplex weighted Hardy space $H^{2}_{\beta}(\mathbb{U_{\mathbb{BC}}})$ with weight $\beta(n)=(n+1)^{1/2}$. Thus for each $W\in \mathbb U_\mathbb {BC}$, the reproducing kernel for $D(\mathbb{U_{\mathbb{BC}}})$ is given by $$K_W(Z)=T(W^{\dagger_3}Z)=\sum_{n=o}^{\infty}\frac{(W^{\dagger_3})^nZ^n}{n+1}=\frac{1}{W^{\dagger_3}Z}log\left(\frac{1}{1-W^{\dagger_3}Z}\right).$$
Further, we can also write \begin{align*} K_W(Z)&=\frac{1}{W^{\dagger_3}}log\left(\frac{1}{1-W^{\dagger_3}Z}\right)\\\  
 &=e_1\frac{1}{\overline{w}_1z_1}log\left(\frac{1}{1-\overline{w}_1z_1}\right)+e_2\frac{1}{\overline{w}_2z_2}log\left(\frac{1}{1-\overline{w}_2z_2}\right)\\
 &=e_1K_{w_1}(z_1)+e_2K_{w_2}(z_2),
 \end{align*}
 where $K_{w_1}(z_1)$ and $K_{w_2}(z_2)$ are the reproducing kernels for the classical Dirichlet space $D(\mathbb U).$
 \end{remark}    
   \end{section}

\bibliographystyle{amsplain}

\begin{thebibliography}{99}
%
\bibitem{YY} D. Alpay, M. E. Luna-Elizarraras, M. Shapiro and D. C. Struppa, {\em Basics of Functional Analysis with Bicomplex scalars, and Bicomplex Schur Analysis}, Springer Briefs in Mathematics, 2014. 
%
 \bibitem{HH}  F. Colombo, I. Sabadini and D. C. Struppa, {\em Bicomplex holomorphic functional calculus}, Math. Nachr. \textbf{287}, No. 13 (2013), 1093-1105. 
%
\bibitem{H_6H_6}  F. Colombo, I. Sabadin, D. C. Struppa,  A. Vajiac and M. B. Vajiac, {\em Singularities of functions of one and several bicomplex variables}, Ark. Mat. \textbf{49}, (2011), 277-294.
%
\bibitem{JJ} J. B. Conway, {\em A Course in Functional Analysis}, 2nd Edition, Springer, Berlin, 1990.
%
\bibitem{EE} C. C. Cowen and B. D. MacCluer, {\em Composition Operators on Spaces of Analytic Functions}, Studies in Advanced Mathematics, CRC Press, Boca Raton, 1995.
%
\bibitem{ff} H. De Bie, D. C. Struppa, A. Vajiac and M. B. Vajiac, {\em The Cauchy-Kowalewski product for bicomplex holomorphic functions}, Math. Nachr. \textbf{285}, No. 10 (2012), 1230-1242.

%
 \bibitem{G_1G_1} R. Gervais Lavoie, L. Marchildon and D. Rochon, {\em Finite-dimensional bicomplex Hilbert spaces}, Adv. Appl. Clifford Algebr. \textbf{21}, N0. 3 (2011), 561-581.
%
\bibitem{GG} R. Gervais Lavoie, L. Marchildon and D. Rochon, {\em Infinite-dimensional bicomplex Hilbert spaces}, Ann. Funct. Anal. \textbf{1}, No. 2 (2010), 75-91.
%
\bibitem{LL} R. Kumar, R. Kumar and D. Rochon, {\em The fundamental theorems in the framework of bicomplex topological modules}, (2011), arXiv:1109.3424v1.
%
\bibitem{KS} R. Kumar and K. Singh, {\em Bicomplex linear operators on bicomplex Hilbert spaces and Littlewood's subordination theorem}, Adv. Appl. Clifford Algebr., (to appear).  
%
\bibitem{Hahn} M. E. Luna-Elizarraras, C. O. Perez-Regalado and M. Shapiro, {\em On linear functionals and Hahn-Banach theorems for hyperbolic and bicomplex modules}, Adv. Appl. Clifford Algebr. \textbf{24}, (2014), 1105-1129.    
%
\bibitem{MM} M. E. Luna-Elizarraras, M. Shapiro and D. C. Struppa, {\em On Clifford analysis for holomorphic mappings}, Adv. Geom. \textbf{14}, No. 3 (2014), 413-426.
%
\bibitem{M_1M_1} M. E. Luna-Elizarraras, M. Shapiro, D. C. Struppa and A. Vajiac, {\em Bicomplex numbers and their elementary functions}, Cubo \text{14}, No. 2 (2012), 61-80.
%
\bibitem{ZZ} M. E. Luna-Elizarraras, M. Shapiro, D. C. Struppa and A. Vajiac, {\em Complex Laplacian and derivatives of bicomplex functions}, Complex Anal. Oper. Theory \textbf{7} (2013), 1675-1711.  
%
\bibitem{KK} G. B. Price, {\em An Introduction to Multicomplex Spaces and Functions}, 3rd Edition, Marcel Dekker, New York, 1991.
%
\bibitem{Z_1Z_1} J. D. Riley, {\em Contributions to the theory of functions of a bicomplex variable}, Tohoku Math J(2) \textbf{5}, No.2 (1953), 132-165.
%
\bibitem{RR} D. Rochon and M. Shapiro, {\em On algebraic properties of bicomplex and hyperbolic numbers}, Anal. Univ. Oradea, Fasc. Math. \textbf{11} (2004), 71-110.
%
\bibitem{XX} D. Rochon and S. Tremblay, {\em Bicomplex Quantum Mechanics II: The Hilbert Space}, Advances in Applied Clifford Algebras, \textbf{16} No. 2 (2006), 135-157.
 %
 \bibitem{SS} J. H. Shapiro, {\em Composition Operators and Classical Function Theory}, Springer-Verlag, New-York, 1993.
%
 \bibitem{shield} A. L. Shields, {\em Weighted shift operators and analytic function theory}, Topics in Operator Theory, Math. Surveys Monographs, Amer. Math. Soc., Providence, \textbf{13}, (1974), 49-128.

\end{thebibliography}

\noindent Romesh Kumar, \textit{Department of Mathematics, University of Jammu, Jammu, J\&K - 180 006, India.}\\
E-mail :\textit{ romesh\_jammu@yahoo.com}\\

\noindent Kulbir Singh, \textit{Department of Mathematics, University of Jammu, Jammu,  J\&K - 180 006, India.}\\
E-mail :\textit{ singhkulbir03@yahoo.com}\\

\noindent Heera Saini, \textit{Department of Mathematics, University of Jammu, Jammu,  J\&K - 180 006, India.}\\
E-mail :\textit{ heerasainihs@gmail.com}\\

\noindent Sanjay Kumar, \textit{Department of Mathematics, Central University of Jammu, Jammu, J\&K - 180 006, India.}\\
E-mail :\textit{ sanjaykmath@gmail.com}

\end{document}